\numberwithin{equation}{section}
\newtheorem{theorem}{Theorem}[section]
\newtheorem{lemma}[theorem]{Lemma}
\theoremstyle{definition}
\newtheorem{remark}[theorem]{Remark}
\newcommand{\ds}{\displaystyle}
\newcommand{\la}{\lambda}
\newcommand{\al}{\alpha}
\newcommand{\R}{{\mathbb R}}
\begin{document}
\title
[Mass minimizers and concentration] {Mass minimizers and concentration for nonlinear Choquard equations in $\R^N$}\thanks {a: Partially  supported  by  NSFC No: 11371159 }
\author{ Hong yu Ye}

\address{$^*$ H. Y. Ye, College of Science, Wuhan University of Science and Technology, Wuhan, 430065, P. R. China}

\email{ yyeehongyu@163.com}

\begin{abstract}
In this paper, we study the existence of minimizers to the following functional related to the nonlinear Choquard equation:
$$
 E(u)=\frac{1}{2}\ds\int_{\R^N}|\nabla u|^2+\frac{1}{2}\ds\int_{\R^N}V(x)|u|^2-\frac{1}{2p}\ds\int_{\R^N}(I_\al*|u|^p)|u|^p
$$
on $\widetilde{S}(c)=\{u\in H^1(\R^N)|\ \int_{\R^N}V(x)|u|^2<+\infty,\ |u|_2=c,c>0\},$ where $N\geq1$ $\al\in(0,N)$, $\frac{N+\alpha}{N}\leq p<\frac{N+\alpha}{(N-2)_+}$ and $I_\al:\R^N\rightarrow\R$ is the Riesz potential. We present sharp existence results for $E(u)$ constrained on $\widetilde{S}(c)$ when $V(x)\equiv0$ for all $\frac{N+\alpha}{N}\leq p<\frac{N+\alpha}{(N-2)_+}$. For the mass critical case $p=\frac{N+\alpha+2}{N}$, we show that if $0\leq V(x)\in L_{loc}^{\infty}(\R^N)$ and $\lim\limits_{|x|\rightarrow+\infty}V(x)=+\infty$, then mass minimizers exist only if $0<c<c_*=|Q|_2$ and concentrate at the flattest minimum of $V$ as $c$ approaches $c_*$ from below, where $Q$ is a groundstate solution of $-\Delta u+u=(I_\alpha*|u|^{\frac{N+\alpha+2}{N}})|u|^{\frac{N+\alpha+2}{N}-2}u$ in $\R^N$.  \\

\noindent{\bf Keywords:} Choquard equation; Mass concentration; Normalized solutions; Sharp existence.\\
\noindent{\bf Mathematics Subject Classification(2010):} 35J60, 35Q40, 46N50\\
\end{abstract}

\maketitle
\section{Introduction}

In this paper, we consider the following semilinear Choquard problem
\begin{equation}\label{1.1}
  -\Delta u-\mu u=(I_\al*|u|^{p})|u|^{p-2}u,\ \ \  x\in\R^N,\ \mu\in\R
\end{equation}
where $N\geq 1,$ $\al\in(0,N)$, $\frac{N+\alpha}{N}\leq p<\frac{N+\alpha}{(N-2)_+}$, here $\frac{N+\alpha}{(N-2)_+}=\frac{N+\alpha}{N-2}$ if $N\geq3$ and $\frac{N+\alpha}{(N-2)_+}=+\infty$ if $N=1,2$. $I_\al:\R^N\rightarrow\R$ is the Riesz potential \cite{r} defined as $$I_\al(x)=\frac{\Gamma(\frac{N-\al}{2})}{\Gamma(\frac{\alpha}{2})\pi^{\frac{N}{2}}2^\al}\frac{1}{|x|^{N-\al}},\ \ \ \forall\ x\in\R^N\backslash\{0\}.$$

Problem \eqref{1.1} is a nonlocal one due to the existence of the nonlocal nonlinearity. It arises in various fields of mathematical physics, such as quantum mechanics, physics of laser beams, the physics of multiple-particle systems, etc. When $N=3$, $\mu=-1$ and $\al=p=2$, \eqref{1.1} turns to be the well-known Choquard-Pekar equation:
\begin{equation}\label{1.2}
-\Delta u+u=(I_2*|u|^2)u,\ \ \ \   x\in\R^3,
\end{equation}
which was proposed as early as in 1954 by Pekar \cite{pekar}, and by a work of Choquard 1976 in a certain approximation to Hartree-Fock theory for one-component plasma, see \cite{lieb,ls}. \eqref{1.1} is also known as the nonlinear stationary Hartree equation since if $u$ solves \eqref{1.1} then $\psi(t,x)=e^{it}u(x)$ is a solitary wave of the following time-dependent Hartree equation
$$i\psi_t=-\Delta \psi-(I_\alpha*|\psi|^p)|\psi|^{p-2}\psi\ \ \ \hbox{in}\ \R^+\times\R^N,$$ see \cite{gv,mpt}.

In the past years, there are several approaches to construct nontrivial solutions of \eqref{1.1}, see e.g. \cite{csv,lieb,lions,l,m,mpt,tm} for $p=2$ and \cite{ms1,ms2}. One of them is to look for a constrained critical point of the functional
\begin{equation}\label{1.3}
I_p(u)=\frac12\ds\int_{\R^N}|\nabla u|^2-\frac{1}{2p}\ds\int_{\R^N}(I_\alpha*|u|^p)|u|^p
\end{equation}
on the constrained $L^2$-spheres in $H^1(\R^N)$:
$$S(c)=\{u\in H^1(\R^N)|\ |u|_2=c,c>0\}.$$
In this way, the parameter $\mu\in\R$ will appear as a Lagrange multiplier and such solution is called a normalized solution.
By the following well known Hardy-Littlewood-Sobolev inequality: For $1<r,s<+\infty$, if $f\in L^r(\R^N),$ $g\in L^s(\R^N)$, $\lambda\in(0,N)$ and $\frac{1}{r}+\frac1s+\frac{\la}{N}=2$, then
\begin{equation}\label{1.20}
\ds\int_{\R^N}\ds\int_{\R^N}\frac{f(x)g(y)}{|x-y|^\la}\leq C_{r,\la,N}|f|_r|g|_s,
\end{equation}
we see that $I_p(u)$ is well defined and a $C^1$ functional. Set
\begin{equation}\label{1.4}
I_p(c^2)=\inf\limits_{u\in S(c)}I_p(u),
\end{equation}
then minimizers of $I_p(c^2)$ are exactly critical points of $I_p(u)$ constrained on $S(c)$.

 Normalized solutions for equation \eqref{1.2} have been studied in \cite{lieb,lions}. In this paper, one of our purposes is to get a general and sharp result for the existence of minimizers for the minimization problem \eqref{1.4}.

To state our main result, we first prove the following interpolation inequality with the best constant: For $\frac{N+\alpha}{N}<p<\frac{N+\alpha}{(N-2)_+}$,
\begin{equation}\label{1.5}
\ds\int_{\R^N}(I_\alpha*|u|^p)|u|^p \leq \frac{p}{|Q_p|_2^{2p-2}}\left(\ds\int_{\R^N}|\nabla u|^2\right)^{\frac{Np-(N+\alpha)}{2}}\left(\ds\int_{\R^N}|u|^2\right)^{\frac{N+\alpha-(N-2)p}{2}},
\end{equation}
where equality holds for $u=Q_p$, where $Q_p$ is a nontrivial solution of
\begin{equation}\label{1.6}
-\frac{Np-(N+\alpha)}{2}\Delta Q_p+\frac{N+\alpha-(N-2)p}{2}Q_p=(I_\alpha*|Q_p|^p)|Q_p|^{p-2}Q_p,\ \ \ x\in\ \R^N.
\end{equation}
In particular, $Q_{\frac{N+\alpha+2}{N}}$ is a groundstate solution, i.e. the least energy solution among all nontrivial solutions of \eqref{1.6}. Moreover, when $p=\frac{N+\alpha+2}{N}$, all groundstate solutions of \eqref{1.6} have the same $L^2$-norm (see Lemma \ref{lem3.2} below).

Recall in \cite{lieb2} that for $p=\frac{N+\alpha}{N}$, the following Hardy-Littlewood-Sobolev inequality with the best constant:
\begin{equation}\label{1.12}
\ds\int_{\R^N}(I_\alpha*|u|^{\frac{N+\alpha}{N}})|u|^{\frac{N+\alpha}{N}} \leq \frac{1}{|Q_{\frac{N+\al}{N}}|_2^{\frac{2(N+\alpha)}{N}}}\left(\ds\int_{\R^N}|u|^2\right)^{\frac{N+\alpha}{N}}
\end{equation}
with equality if and only if $u=Q_{\frac{N+\al}{N}}$, where $Q_{\frac{N+\al}{N}}=C\left(\frac{\eta}{\eta^2+|x-a|^2}\right)^{\frac{N}{2}},$ $C>0$ is a fixed constant, $a\in\R^N$ and $\eta\in(0,+\infty)$ are parameters.\\

Then our first result is as follows:

\begin{theorem}\label{th1.1}~~Assume that $N\geq1$, $\alpha\in(0,N)$ and $\frac{N+\alpha}{N}\leq p<\frac{N+\alpha}{(N-2)_+}$.

$(1)$~~If $p=\frac{N+\al}{N}$, for any $c>0$,$$I_{\frac{N+\al}{N}}(c^2)=-\frac{N}{2(N+\al)}(\frac{c}{|Q_{\frac{N+\al}{N}}|_2})^{\frac{2(N+\al)}{N}}$$ and $I_{\frac{N+\al}{N}}(c^2)$ has no minimizer.

$(2)$~~If $\frac{N+\alpha}{N}<p<\frac{N+\alpha+2}{N}$, then $I_p(c^2)<0$ for all $c>0$, moreover, $I_p(c^2)$ has at least one minimizer for each $c>0$.

$(3)$~~If $p=\frac{N+\alpha+2}{N}$, let $c_*:=|Q_{\frac{N+\alpha+2}{N}}|_2$, then

~~~~~~~~~~~~\ \ \ (i)~~~$I_{\frac{N+\alpha+2}{N}}(c^2)=\left\{%
\begin{array}{ll}
    0, & \hbox{if~$0<c\leq c_*$}, \\
    -\infty, & \hbox{if~$c>c_*$};\\
\end{array}%
\right.$

~~~~~~~~~~~~\ \ \ (ii)~~$I_{\frac{N+\alpha+2}{N}}(c^2)$ has no minimizer if $c\neq c_*$;

~~~~~~~~~~~~\ \ \ (iii)~~each groundstate of \eqref{1.6} is a minimizer of $I_{\frac{N+\alpha+2}{N}}(c_*^2)$.

~~~~~~~~~~~~\ \ \ (iv)~~there is no critical point for $I_{\frac{N+\alpha+2}{N}}(u)$ constrained on $S(c)$ for each $0<c<c_*$.

$(4)$~~If $\frac{N+\alpha+2}{N}<p<\frac{N+\alpha}{(N-2)_+}$, then $I_p(c^2)$ has no minimizer for each $c>0$ and $I_p(c^2)=-\infty$.
\end{theorem}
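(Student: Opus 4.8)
The plan is to reduce everything to two ingredients: the $L^2$-preserving dilation $u_t(x)=t^{N/2}u(tx)$, under which $\int_{\R^N}|\nabla u_t|^2=t^2\int_{\R^N}|\nabla u|^2$ and $\int_{\R^N}(I_\al*|u_t|^p)|u_t|^p=t^{Np-(N+\al)}\int_{\R^N}(I_\al*|u|^p)|u|^p$, together with the sharp inequalities \eqref{1.5} and \eqref{1.12}. Writing $A(u)=\int_{\R^N}|\nabla u|^2$, $D(u)=\int_{\R^N}(I_\al*|u|^p)|u|^p$ and $a:=\frac{Np-(N+\al)}{2}$, one has $I_p(u_t)=\tfrac12 t^2 A(u)-\tfrac{1}{2p}t^{2a}D(u)$ for $u\in S(c)$, so that the sign of $a-1$ (equivalently, the position of $p$ relative to $\frac{N+\al+2}{N}$) governs the whole picture: $a=0$ in case $(1)$, $0<a<1$ in case $(2)$, $a=1$ in case $(3)$, and $a>1$ in case $(4)$.

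First I would dispatch the ``pure scaling'' statements. For $(4)$, $2a>2$ forces $I_p(u_t)\to-\infty$ as $t\to+\infty$ for any fixed $u$ with $D(u)>0$, giving $I_p(c^2)=-\infty$ and no minimizer. For $(1)$, inserting \eqref{1.12} into $I_{\frac{N+\al}{N}}$ and using $A\ge0$ yields the lower bound $I_{\frac{N+\al}{N}}(c^2)\ge-\frac{N}{2(N+\al)}(c/|Q_{\frac{N+\al}{N}}|_2)^{2(N+\al)/N}$; the reverse inequality comes from dilating the scale-invariant HLS extremizer normalized to mass $c$ and letting $t\to0$, which kills $A$ while leaving $D$ fixed (here $a=0$). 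A minimizer would need $A=0$, hence $u\equiv0$, contradicting $|u|_2=c$, so none exists. For the lower bound in $(2)$, feeding \eqref{1.5} into $I_p$ and applying Young's inequality reduces matters to $\tfrac12 A-C\,A^{a}$ with $0<a<1$, which is coercive in $A$ and bounded below; moreover $I_p(c^2)<0$ because, for $0<a<1$, the negative $t^{2a}$ term dominates the positive $t^2$ term as $t\to0^+$.

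The genuine analytic obstacle is existence of a minimizer in case $(2)$, where translation invariance ($V\equiv0$) blocks naive compactness. Here I would run the concentration--compactness scheme on an $H^1$-bounded (by the coercivity above) minimizing sequence $u_n$. Vanishing is excluded because it forces $D(u_n)\to0$: by the Hardy--Littlewood--Sobolev estimate $D(u)\le C|u|_{2Np/(N+\al)}^{2p}$ with $\frac{2Np}{N+\al}\in(2,2^*)$ in the subcritical range, so Lions' vanishing lemma gives $u_n\to0$ in $L^{2Np/(N+\al)}$, whence $\liminf I_p(u_n)\ge0$, contradicting $I_p(c^2)<0$. Dichotomy is excluded by the strict subadditivity $I_p(c^2)<I_p(c_1^2)+I_p(c_2^2)$ whenever $c^2=c_1^2+c_2^2$, which I expect to be the hard step. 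I would prove it through the amplitude scaling $w=\sqrt{\theta}\,u$: since $A(w)=\theta A(u)$ and $D(w)=\theta^{p}D(u)$ with $\theta\le\theta^{p}$ for $\theta>1$, one gets $I_p(w)=\tfrac{\theta}{2}A(u)-\tfrac{\theta^{p}}{2p}D(u)\le\theta^{p}I_p(u)$, hence $I_p(\theta c^2)\le\theta^{p}I_p(c^2)<\theta I_p(c^2)$ for $\theta>1$ (using $I_p(c^2)<0$); the reversed inequality for $\theta\in(0,1)$ then yields the subadditivity by the usual splitting $c_i^2=\theta_i c^2$. The surviving compact case gives, after a suitable translation, strong $H^1$-convergence of $u_n$ to a minimizer.

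Finally, case $(3)$ is the borderline $a=1$, where $I_p(u_t)=t^2 I_p(u)$. Inequality \eqref{1.5} now reads $I_p(u)\ge\tfrac12 A(u)\big(1-(c/c_*)^{2(p-1)}\big)$ with $c_*=|Q_{\frac{N+\al+2}{N}}|_2$. For $0<c\le c_*$ the bracket is $\ge0$, so $I_p(c^2)\ge0$, while dilating with $t\to0$ gives $I_p(c^2)\le0$; for $c>c_*$ the test function $\tfrac{c}{c_*}Q$ has $I_p<0$ and scaling $t\to+\infty$ sends $I_p(c^2)\to-\infty$, proving (i). Nonexistence in (ii) for $c<c_*$ follows since $I_p(c^2)=0$ together with $I_p(u)\ge\tfrac12 A(u)(1-(c/c_*)^{2(p-1)})$ forces $A=0$; for $c>c_*$ it is vacuous. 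For (iii), each groundstate $Q$ of \eqref{1.6} satisfies $I_p'(Q)=-(p-1)Q$, i.e. it is a constrained critical point on $S(|Q|_2)$, has $|Q|_2=c_*$ by Lemma \ref{lem3.2}, and the scaling identity $\frac{d}{dt}I_p(Q_t)\big|_{t=1}=2I_p(Q)=0$ (with $Q_t$ the above dilation) gives $I_p(Q)=0=I_p(c_*^2)$, so $Q$ minimizes. The same identity proves (iv): any constrained critical point $u$ on $S(c)$ with $c<c_*$ would satisfy $I_p(u)=0$, yet $I_p(u)\ge\tfrac12 A(u)(1-(c/c_*)^{2(p-1)})>0$ unless $A(u)=0$, i.e. $u\equiv0$, a contradiction.
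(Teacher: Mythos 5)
Your proposal follows essentially the same route as the paper: the same dilation analysis and sharp constants \eqref{1.5}, \eqref{1.12} settle (1), (3)(i)--(ii) and (4); your strict subadditivity via the amplitude scaling $\sqrt{\theta}\,u$ is exactly the paper's Lemma \ref{lem2.4}(2) (there written with $\theta u_n$); and your existence proof in (2) is concentration--compactness keyed to the vanishing lemma plus subadditivity, as in the paper. There are, however, two points where your implementation has genuine gaps relative to what the paper does.

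First, in case (2) you exclude dichotomy ``by strict subadditivity'' as if this step were routine. For the Riesz-potential term it is not free: splitting $u_n\approx v_n+w_n$ into bumps whose supports separate produces the cross term $\int_{\R^N}(I_\alpha*|v_n|^p)|w_n|^p$, and the natural bound for it involves quantities like $|v_n|_p^p$, which the $H^1$ bound does not control when $p<2$ (and $p<2$ does occur, since the admissible interval starts at $\frac{N+\alpha}{N}<2$). The paper bypasses dichotomy altogether: after excluding vanishing it passes to the translated weak limit $u$, and uses the nonlocal Brezis--Lieb lemma (Lemma \ref{lem2.1}) together with the continuity of $c\mapsto I_p(c^2)$ (Lemma \ref{lem2.4}(1)) to show that if $\alpha:=|u|_2<c$ then $I_p(c^2)\ge I_p(\alpha^2)+I_p(c^2-\alpha^2)$, contradicting \eqref{2.8}; hence $|u|_2=c$, which gives $L^2$-convergence, then $B(u_n)\to B(u)$ and lower semicontinuity of the gradient term conclude. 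You should either adopt this splitting or supply the missing cross-term estimates.

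Second, in (iii) and especially (iv), your key identity $\frac{d}{dt}I_p(u_t)\big|_{t=1}=0$ for a constrained critical point is precisely the Pohozaev identity, and the chain-rule derivation you sketch is not justified at the $H^1$ level: the dilation path $t\mapsto u_t$ is differentiable in $H^1$ only if $x\cdot\nabla u\in L^2$, which a weak solution of the Euler--Lagrange equation does not satisfy a priori. The paper fills this hole with Lemma \ref{lem2.5}, which invokes the regularity theory and Pohozaev identity of \cite{ms2} (Proposition 3.5) for solutions of \eqref{1.1}; combined with the multiplier relation this yields $A(u)=\frac{N}{N+\alpha+2}B(u)$, from which (iv) follows exactly as you argue. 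For (iii), where only groundstates are involved, the identity \eqref{3.2} of Lemma \ref{lem3.1}(2) (together with Lemma \ref{lem3.2}, which you correctly invoke for $|Q|_2=c_*$) is available because groundstates are smooth with exponential decay. With these two repairs your argument coincides with the paper's.
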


\begin{remark}\label{rem1.2}~~Theorem \ref{th1.1} can be seemed as a consequence of the results in Theorem 9 of \cite{lieb} for $p=2$ and in Theorem 1 of \cite{ms1}. However, we still state and prove Theorem \ref{th1.1} here by using an alternative method since our result is delicate and it provides a framework to our subsequent main considerations.
\end{remark}

\begin{remark}\label{rem1.3}~~

$(1)$\ \ $c_*$ is unique.

(2)\ \ Since the positive solution of \eqref{1.6} with $\alpha=p=2$ is uniquely determined up to translations see e.g. \cite{cg,klr,ll}, it follows that if $N=4$ and $\alpha=2$, then up to translations, \textbf{the minimizer of $I_{\frac{N+\alpha+2}{N}}(c_*^2)$ is unique  and  there exists no critical point for $I_{\frac{N+\alpha+2}{N}}(u)$ constrained on ${S(c)}$ for each $c\neq c_*$.}

(3)\ \ For $N\geq3$ and $\frac{N+\alpha+2}{N}<p<\frac{N+\alpha}{N-2}$, it has been proved in \cite{ly} that for each $c>0$, $I_p(u)$ has a mountain pass geometry on $S(c)$ and there exits a couple $(u_c,\mu_c)\in S(c)\times\R^-$ solution of \eqref{1.1} with $I_p(u_c)=\gamma(c)$, where $\gamma(c)$ denotes the mountain pass level on $S(c)$.
\end{remark}

By Theorem 1.1, $p=\frac{N+\alpha+2}{N}$ is called $L^2$-critical exponent for \eqref{1.4}. In order to get critical points under the mass constraint for such $L^2$-critical case, we add a nonnegative perturbation term to the right hand side of \eqref{1.3}, i.e. considering the following functional:
\begin{equation}\label{1.7}
E(u)=\frac{1}{2}\ds\int_{\R^N}|\nabla u|^2+\frac{1}{2}\ds\int_{\R^N}V(x)|u|^2-\frac{N}{2(N+\alpha+2)}\ds\int_{\R^N}(I_\alpha*|u|^{\frac{N+\alpha+2}{N}})|u|^{\frac{N+\alpha+2}{N}},
\end{equation}
where
$$V(x)\in L^{\infty}_{loc}(\R^N),\ \ \inf\limits_{x\in\R^N}V(x)=0\ \ \hbox{and}\ \ \lim\limits_{|x|\rightarrow+\infty}V(x)=+\infty.\eqno{(V_0)}$$
Based on $(V_0)$, we introduce a Sobolev space $\mathcal{H}=\{u\in H^1(\R^N)|\ \int_{\R^N}V(x)|u^2<+\infty\}$ with its associated norm $\|u\|_{\mathcal{H}}=(\int_{\R^N}(|\nabla u|^2+|u|^2+V(x)|u|^2))^{\frac12}.$

\begin{theorem}\label{th1.2}\ \ Assume that $N\geq1$, $\alpha\in(0,N)$ and $(V_0)$ holds. Set
\begin{equation}\label{1.8}
e_c=\inf\limits_{u\in \widetilde{S}(c)}E(u),
\end{equation}
where $\widetilde{S}(c)=\{u\in \mathcal{H}|\ |u|_2=c\}.$ Let $c_*$ be given in Theorem \ref{th1.1}.

(1)\ \ If $0<c<c_*$, then $e_c$ has at least one minimizer and $e_c>0$;

(2)\ \ Let $N-2\leq \alpha<N$ if $N\geq3$ and $0<\al<N$ if $N=1,2$, then for each $c\geq c_*$, $e_c$ has no minimizer; Moreover, $e_c=\left\{%
\begin{array}{ll}
    0, & \hbox{if~$c=c_*$} \\
    -\infty, & \hbox{if~$c>c_*$}\\
\end{array}%
\right.$ and $\lim\limits_{c\rightarrow (c_*)^-}e_c=e_{c_*}.$
\end{theorem}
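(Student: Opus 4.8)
The plan is to read everything off the sharp inequality \eqref{1.5}, which at $p=\frac{N+\al+2}{N}$ (where $Np-(N+\al)=2$, $\frac{N+\al-(N-2)p}{2}=\frac{\al+2}{N}$, $2p-2=\frac{2(\al+2)}{N}$ and $c_*=|Q|_2$) becomes, on $\widetilde{S}(c)$,
$$\frac{1}{2p}\ds\int_{\R^N}(I_\al*|u|^p)|u|^p\leq\frac12\Big(\frac{c}{c_*}\Big)^{\frac{2(\al+2)}{N}}\ds\int_{\R^N}|\nabla u|^2.$$
Since $V\geq0$, this yields $E(u)\geq\frac12\big(1-(c/c_*)^{2(\al+2)/N}\big)\int_{\R^N}|\nabla u|^2+\frac12\int_{\R^N}V|u|^2$ on $\widetilde{S}(c)$. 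The bracket is positive for $c<c_*$, zero at $c=c_*$, negative for $c>c_*$, and this trichotomy organizes the whole proof. In particular the lower bound $e_{c_*}\geq0$ in part (2) is immediate, and for $0<c<c_*$ the same inequality shows any minimizing sequence is bounded in $\mathcal{H}$.

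For part (1) I would exploit $(V_0)$ through the compact embedding $\mathcal{H}\hookrightarrow L^2(\R^N)$, which holds because $V\to+\infty$. A bounded minimizing sequence $u_n$ then converges strongly in $L^2$ and, interpolating against the $H^1$-bound, strongly in every $L^q$ with $2\le q<2^*$; since the exponent $q=\frac{2(N+\al+2)}{N+\al}$ that controls the Riesz term via \eqref{1.20} lies strictly below $2^*$, the nonlocal term passes to the limit, and weak lower semicontinuity of the gradient and potential terms gives a minimizer $u\in\widetilde{S}(c)$. To get $e_c>0$ I argue by contradiction: $e_c=0$ would force $\int|\nabla u_n|^2\to0$ and $\int V|u_n|^2\to0$, so the $L^2$-limit $u$ (with $|u|_2=c$ by compactness) would satisfy $\nabla u=0$, i.e. $u\equiv0$, contradicting $c>0$.

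For the divergence $e_c=-\infty$ when $c>c_*$ I would avoid the groundstate entirely: choose $v\in\widetilde{S}(c)$ with compact support and $I_p(v)<0$ (possible because $I_p(c^2)=-\infty$ by Theorem \ref{th1.1}(3) and compactly supported functions are dense), and use the mass-preserving concentration $v_\tau(x)=\tau^{N/2}v(\tau x)$. As the support of $v_\tau$ shrinks, $\int V|v_\tau|^2\leq(\sup_{B_{R_0}}V)\,c^2$ stays bounded, while both $\int|\nabla v_\tau|^2$ and the Riesz term scale as $\tau^2$, so $E(v_\tau)=\tau^2 I_p(v)+O(1)\to-\infty$. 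Nonexistence for $c>c_*$ is then automatic; nonexistence at $c=c_*$ I would prove by rigidity: a minimizer $u$ would force $I_p(u)=0$ and $\int V|u|^2=0$, the first making $u$ an optimizer of \eqref{1.5} (hence a translate/rescaling of $Q$, so $|u|>0$ a.e.), which is incompatible with the second since $V\not\equiv0$.

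The heart of the matter is the upper bound $e_{c_*}\leq0$, from which $e_{c_*}=0$ and $\lim_{c\to c_*^-}e_c=e_{c_*}$ both follow. Fixing a point $x_0$ with $V(x_0)=0$, I would set $Q_R=c_*\,\eta_R Q/|\eta_R Q|_2$ with $\eta_R$ a cutoff at radius $R$, and test with $u_{R,\tau}(x)=\tau^{N/2}Q_R(\tau(x-x_0))\in\widetilde{S}(c_*)$; since $I_p(Q)=0$ one gets $E(u_{R,\tau})=\tau^2\varepsilon_R+\frac12\int_{\R^N}V(x_0+y/\tau)|Q_R(y)|^2\,dy$ with $\varepsilon_R:=I_p(Q_R)\geq0$, $\varepsilon_R\to0$. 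The idea is to let $R,\tau\to\infty$ jointly with $R/\tau\to0$ (so the potential integral tends to $V(x_0)=0$, using continuity of $V$ at $x_0$ and the $L^\infty_{loc}$ bound on the shrinking support) while keeping $\tau^2\varepsilon_R\to0$; this is feasible exactly when $\varepsilon_R=o(R^{-2})$. This is where the hypothesis is decisive and where I expect the main obstacle: the condition $N-2\le\al<N$ (resp. $0<\al<N$ for $N=1,2$) amounts precisely to $p\geq2$, and for $p\geq2$ the Choquard groundstate $Q$ decays exponentially, giving $\varepsilon_R=O(e^{-cR})$ and making the balancing trivial; for $p<2$ the groundstate decays only polynomially and this joint limit can fail, which is why the theorem restricts $\al$. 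Finally $\lim_{c\to c_*^-}e_c=e_{c_*}$ follows by testing $e_c$ with $\frac{c}{c_*}u_{R,\tau}\in\widetilde{S}(c)$ and letting $c\to c_*^-$, combined with $e_c>0=e_{c_*}$ from part (1).
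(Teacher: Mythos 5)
Your overall skeleton coincides with the paper's: the sharp inequality \eqref{1.5} at $p=\frac{N+\alpha+2}{N}$ gives exactly the bound $E(u)\geq\frac12\bigl[1-(c/c_*)^{\frac{2(\alpha+2)}{N}}\bigr]\int_{\R^N}|\nabla u|^2+\frac12\int_{\R^N}V|u|^2$ (the paper's \eqref{3.5}), part (1) is proved the same way via the compact embedding $\mathcal{H}\hookrightarrow L^s(\R^N)$, $2\le s<2^*$ (Lemma \ref{lem3.4}), and $\lim_{c\to (c_*)^-}e_c=e_{c_*}$ is obtained from the same family of test functions. Two of your sub-arguments genuinely differ. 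For $c>c_*$ you take a compactly supported $v\in S(c)$ with $I_p(v)<0$ (available by density since $I_p(c^2)=-\infty$ by Theorem \ref{th1.1}, and $v\in\mathcal{H}$ because $V\in L^\infty_{loc}$) and run the mass-preserving concentration $v_\tau$; this is correct, simpler than the paper's cutoff-groundstate computation \eqref{3.6}--\eqref{3.7}, and notably does not use any decay of the groundstate, so this half works for all $\alpha\in(0,N)$. For $e_{c_*}\le0$ your construction is essentially the paper's (cutoff groundstate; $\alpha\ge N-2$ is needed precisely so that $p\ge2$ and the decay is exponential, as you correctly diagnose), except that the paper never fixes a zero of $V$: it proves $e_{c_*}\le V(x_0)/2$ for an \emph{arbitrary} $x_0$ and then takes the infimum over $x_0$. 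That is the safer formulation, because $(V_0)$ only assumes $\inf_{\R^N}V=0$ with $V\in L^\infty_{loc}$, so a point with $V(x_0)=0$ need not exist; your version implicitly assumes the infimum is attained at a continuity point.

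The one genuine gap is in your rigidity argument at $c=c_*$. From $E(u)=e_{c_*}=0$ you correctly deduce $C(u)=\int_{\R^N}V|u|^2=0$ and $I_{\frac{N+\alpha+2}{N}}(u)=0$, i.e.\ equality in \eqref{1.5}; but you then assert that $u$ is ``a translate/rescaling of $Q$''. This classification of optimizers is not available: uniqueness of positive solutions of \eqref{1.6} is open except for $N=4$, $\alpha=2$ (see Remark \ref{rem1.3}(2)). What you actually need is weaker and is provable with the paper's own lemmas: any optimizer of \eqref{1.5} satisfies the Euler--Lagrange equation of $W_p$, hence after an $L^2$-norm-preserving dilation it is a nontrivial solution of \eqref{3.1} with mass exactly $c_*$; Lemma \ref{lem3.2} then says it is a groundstate, and Lemma \ref{lem3.1}(3) says groundstates never vanish, which contradicts $C(u)=0$ (since $V\geq1$ outside a compact set, $C(u)=0$ forces $u=0$ a.e.\ there). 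This is in substance the paper's own route: it derives the constrained Euler--Lagrange equation, uses H\"older and $C(u)=0$ to drop the potential term, gets $\mu_{c_*}<0$ from the Pohozaev identity (Lemma \ref{lem2.5}), rescales to a solution of \eqref{3.1} with mass $c_*$, and concludes via Lemmas \ref{lem3.2} and \ref{lem3.1}. So your idea is right, but the step ``optimizer $\Rightarrow$ translate/rescaling of $Q$'' must be replaced by ``optimizer $\Rightarrow$ rescaled groundstate'' through the Euler--Lagrange equation and Lemma \ref{lem3.2}.
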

We also concern the concentration phenomena of minimizers of $e_c$ as $c$ converges to $c_*$ from below. Let $u_c$ be a minimizer of $e_c$ for each $0<c<c_*$, then by \eqref{1.5} and Theorem \ref{th1.2}, we see that
$\int_{\R^N}V(x)|u_c|^2\rightarrow0$ as $c\rightarrow (c_*)^-$, i.e. $u_c$ can be expected to concentrate at the minimum of $V(x)$.
To show this fact, besides condition $(V_0)$, we assume that there exist $m\geq1$ distinct points $x_i\in\R^N$ and $q_i>0$ $(1\leq i\leq m)$ such that
$$
\mu_i:=\lim\limits_{x\rightarrow x_i}\frac{V(x)}{|x-x_i|^{q_i}}\in (0,+\infty).\eqno{(V_1)}
$$
Set
$$q:=\max\{q_1,q_2,\cdots,q_m\}.$$
Let $\{c_k\}\subset(0,c_*)$ be a sequence such that $c_k\rightarrow c_*$ as $k\rightarrow+\infty$. Then Our main result is as follows:

\begin{theorem}\label{th1.3}\ \ Suppose that $N\geq1$, $\alpha\in[N-2,N)$ if $N\geq3$ and $\al\in(0,N)$ if $N=1,2$ and $(V_0)(V_1)$ hold. Then there exists a sequence $\{x_k\}\subset\R^N$ and a groundstate solution $W_0$ of the following equation
\begin{equation}\label{1.9}
-\Delta W_0+W_0=(I_\alpha*|W_0|^{\frac{N+\alpha+2}{N}})|W_0|^{\frac{N+\alpha+2}{N}-2}W_0,\ \ \ x\in\R^N
\end{equation}
and
$$\lambda:=\min\limits_{1\leq i\leq m}\left\{\lambda_i|\ \lambda_i=\left(\frac{q_i}{2c_*^2}\mu_i\ds\int_{\R^N}|x|^{q_i}|W_0(x)|^2\right)^{\frac{1}{q_i+2}}\right\}$$
such that up to a subsequence,
\begin{equation}\label{1.10} [1-(\frac{c_k}{c_*})^{\frac{2(\alpha+2)}{N}}]^{\frac{1}{q+2}\frac{N}{2}}u_{c_k}([1-(\frac{c_k}{c_*})^{\frac{2(\alpha+2)}{N}}]^{\frac{1}{q+2}} x+x_k)\rightarrow[(\frac{\alpha+2}{N})^{\frac{1}{q+2}}\lambda]^{\frac{N}{2}} W_0((\frac{\alpha+2}{N})^{\frac{1}{q+2}}\lambda x)
\end{equation}
in $L^{\frac{2Ns}{N+\alpha}}(\R^N)$ for $\frac{N+\alpha}{N}\leq s<\frac{N+\alpha}{(N-2)+}$ as $k\rightarrow+\infty$.
Moreover, there exists $x_{j_0}\in \{x_i|~\lambda_i=\lambda,~1\leq i\leq m\}$ such that $x_k\rightarrow x_{j_0}$ as $k\rightarrow+\infty$.
\end{theorem}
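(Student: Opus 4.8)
The proof follows the now-standard blow-up strategy for mass-critical constrained minimization, adapted to the nonlocal setting. The starting point is the sharp lower bound coming from the best-constant inequality \eqref{1.5}: since $p=\frac{N+\al+2}{N}$ gives $Np-(N+\al)=2$ and $\frac{N+\al-(N-2)p}{2}=\frac{\al+2}{N}$, for every $u\in\widetilde{S}(c)$ one has
$$E(u)\ge \frac12\Big(1-(\tfrac{c}{c_*})^{\frac{2(\al+2)}{N}}\Big)\ds\int_{\R^N}|\nabla u|^2+\frac12\ds\int_{\R^N}V(x)|u|^2.$$
Writing $\va_c:=1-(\tfrac{c}{c_*})^{\frac{2(\al+2)}{N}}\to 0^+$ as $c\to c_*^-$, this already shows $e_c\ge 0$ and isolates the two competing nonnegative terms whose balance governs the concentration. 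I also record the identities satisfied by a groundstate $W_0$ of \eqref{1.9}: combining the Nehari and Pohozaev relations yields $|\nabla W_0|_2^2=\frac{N}{\al+2}c_*^2$ and $\ds\int_{\R^N}(I_\al*|W_0|^p)|W_0|^p=\frac{N+\al+2}{\al+2}c_*^2$, and in particular $\frac{1}{2p}\int_{\R^N}(I_\al*|W_0|^p)|W_0|^p=\frac12|\nabla W_0|_2^2$, so the kinetic and nonlocal terms cancel at leading order on the groundstate.

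The matching upper bound comes from test functions. For a minimum point $x_i$ of $V$ as in $(V_1)$, set $u_\tau(x)=\tfrac{c}{c_*}\tau^{N/2}W_0(\tau(x-x_i))\in\widetilde{S}(c)$, and compute, using the scaling of the Riesz potential ($Np-N-\al=2$) and the identities above,
$$E(u_\tau)=\frac{N}{2(\al+2)}c^2\va_c\,\tau^2+\frac12(\tfrac{c}{c_*})^2\ds\int_{\R^N}V(x_i+\tfrac{y}{\tau})|W_0(y)|^2\,dy.$$
Taylor expanding $V$ near $x_i$ via $(V_1)$ gives $\int_{\R^N} V(x_i+\tfrac{y}{\tau})|W_0|^2\approx\mu_i\tau^{-q_i}\int_{\R^N}|y|^{q_i}|W_0|^2$, and minimizing the resulting $a\tau^2+b\tau^{-q_i}$ in $\tau$ produces the optimal scale $\tau\sim\va_c^{-1/(q_i+2)}$ and a value of order $\va_c^{q_i/(q_i+2)}$. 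Since $s\mapsto s/(s+2)$ is increasing and $\va_c<1$, the cheapest choice uses the largest exponent $q=\max_i q_i$, i.e.\ the flattest minimum, and among those the smallest coefficient, which is exactly the quantity $\lambda$. This yields $e_c\le(1+o(1))\Lambda\va_c^{q/(q+2)}$ with an explicit constant $\Lambda$; the finiteness of $\int_{\R^N}|y|^{q}|W_0|^2$ required here follows from the decay of $W_0$, and this is where the hypothesis $\al\ge N-2$ (for $N\ge3$) enters.

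For the lower bound and the compactness, let $u_k$ minimize $e_{c_k}$. The reduced inequality combined with the upper bound gives $\frac12\va_k|\nabla u_k|_2^2\le e_{c_k}\lesssim\va_k^{q/(q+2)}$ and $\int_{\R^N}V|u_k|^2\lesssim\va_k^{q/(q+2)}$, whence $|\nabla u_k|_2^2\lesssim\va_k^{-2/(q+2)}$ and $\int_{\R^N}V|u_k|^2\to0$. I then set $\beta_k:=\va_k^{1/(q+2)}\to0$ and consider the mass-preserving rescaling $w_k(x)=\beta_k^{N/2}u_k(\beta_k x+x_k)$, which is precisely the left side of \eqref{1.10}, choosing the centers $x_k$ so that $w_k$ does not vanish (via a Lions-type concentration argument applied to $|w_k|^2$). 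The gradient bound makes $w_k$ bounded in $H^1(\R^N)$ with $|w_k|_2=c_k\to c_*$, and the vanishing of the rescaled energy forces $w_k$ to be asymptotically an extremal of \eqref{1.5}; by the characterization of extremals as groundstates with common $L^2$-norm (Lemma \ref{lem3.2}) and by excluding dichotomy, $w_k$ converges strongly in the stated $L^{\frac{2Ns}{N+\al}}$ norms to a rescaled groundstate. Matching the limiting scale with the optimal $\tau$ from the upper-bound computation, and using $|\nabla W_0|_2^2=\frac{N}{\al+2}c_*^2$, identifies the limit as $[(\frac{\al+2}{N})^{1/(q+2)}\lambda]^{N/2}W_0((\frac{\al+2}{N})^{1/(q+2)}\lambda\,x)$, which is \eqref{1.10}.

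Finally, to locate the concentration point I would argue by contradiction: expanding $\int_{\R^N}V|u_k|^2\approx\mu_{j}\beta_k^{q}\int_{\R^N}|y|^{q}|w_k|^2$ along the convergent rescaled sequence and inserting it into the lower bound shows that if $x_k$ converged to a point with $\lambda_i>\lambda$ (or drifted away from all flattest minima), the total energy would exceed the sharp bound $\Lambda\va_k^{q/(q+2)}$; hence $x_k\to x_{j_0}$ for some $x_{j_0}$ with $\lambda_{j_0}=\lambda$. The main obstacle is the compactness step: for the nonlocal functional one must control the Riesz convolution under simultaneous rescaling and translation and rule out both vanishing and splitting of the rescaled minimizers, so that the weak $H^1$-limit is a genuine nonzero extremal of \eqref{1.5} rather than a mass-losing profile. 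This, together with the decay and moment estimates on $W_0$ that legitimize the potential-energy expansions (where $\al\ge N-2$ is used), is the delicate part, whereas the algebraic energy bookkeeping is routine once the identities for $W_0$ are in hand.
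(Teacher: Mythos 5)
Your outline follows the same global strategy as the paper (sharp inequality \eqref{1.5}, test functions concentrated at the flattest minimum, rescaling at scale $\va_k^{1/(q+2)}$, compactness, then equality-forcing to identify the dilation and the concentration point), but it has a genuine gap at the compactness step, and it is precisely the step the paper spends most of Section 3 on. From \eqref{3.5} and your upper bound you obtain only $\int_{\R^N}|\nabla u_k|^2\lesssim \va_k^{-2/(q+2)}$, i.e.\ an \emph{upper} bound on the gradient. To conclude that the rescaled minimizers $w_k(x)=\va_k^{\frac{N}{2(q+2)}}u_k(\va_k^{\frac{1}{q+2}}x+x_k)$ have a nonzero limit you need the matching \emph{lower} bound $\int_{\R^N}|\nabla u_k|^2\gtrsim \va_k^{-2/(q+2)}$: if $\int_{\R^N}|\nabla w_k|^2\to0$, then by \eqref{3.4} also $B(w_k)\to0$, every choice of centers $x_k$ yields $w_k\rightharpoonup0$, and your assertion that ``the vanishing of the rescaled energy forces $w_k$ to be asymptotically an extremal of \eqref{1.5}'' becomes vacuous --- a sequence with vanishing gradient makes the free energy $\frac12A(w_k)-\frac{N}{2(N+\al+2)}B(w_k)$ tend to $0$ without resembling any extremal. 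This lower bound is not free. The paper first proves the sharp energy lower bound $e_c\geq M_1\va_c^{q/(q+2)}$ (Lemma \ref{lem3.6}, Step 2), and to do so it cannot rescale at the scale $\va_c^{1/(q+2)}$ (that would be circular); instead it shows $A(u_c)\to\infty$ via the compact embedding (Lemma \ref{lem3.4}) and Theorem \ref{th1.2}, rescales by the \emph{intrinsic} parameter $\varepsilon_c$ defined by $\varepsilon_c^{-2}=\frac{N}{2(N+\al+2)}B(u_c)$, and runs the vanishing-lemma/Fatou argument with $(V_0)$--$(V_1)$ at that scale. It then converts the energy lower bound into the gradient lower bound by a separate comparison of $e_b$ with $e_c$ for $b<c$ (Lemma \ref{lem3.7}). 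None of these three steps appears in your proposal, and your final localization argument (``inserting the expansion into the lower bound'') presupposes exactly the nontrivial convergence that these steps are needed to establish.

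A second, smaller but real, defect: your test function $u_\tau=\frac{c}{c_*}\tau^{N/2}W_0(\tau(x-x_i))$ need not belong to $\widetilde{S}(c)$. Conditions $(V_0)$--$(V_1)$ impose no growth restriction on $V$ away from its minima, so $V$ may grow faster than any exponential; then $\int_{\R^N}V(x)|u_\tau|^2$ can be $+\infty$ and the expansion $\int_{\R^N}V(x_i+\frac{y}{\tau})|W_0(y)|^2\,dy\approx\mu_i\tau^{-q_i}\int_{\R^N}|y|^{q_i}|W_0|^2$ has no dominating function. The paper avoids this by inserting a compactly supported cut-off $\varphi$ into the test function (see \eqref{3.6}) and estimating the cut-off error using the exponential decay of the groundstate from Lemma \ref{lem3.1}(4) (the hypothesis $\al\geq N-2$ for $N\geq3$ enters both here and in the finiteness of the moments $\int_{\R^N}|x|^{q_i}|W_0|^2$). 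Once the cut-off is added and Lemmas \ref{lem3.6}--\ref{lem3.7} are supplied, the remainder of your outline --- identification of the limit as a groundstate (the paper does this through the Euler--Lagrange equation and Lemma \ref{lem3.2} rather than through asymptotic extremality of the quotient, but both routes work) and the equality-forcing determination of $\beta$ and $x_{j_0}$ --- does go through essentially as in the paper.
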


\begin{remark}\label{rem1.1}\ \ It has been proved in \cite{ms1} that for $\alpha\in[N-2,N)$ if $N\geq3$ and $\al\in(0,N)$ if $N=1,2$, then each groundstate solution $u$ of \eqref{1.9} satisfies that $\lim\limits_{|x|\rightarrow+\infty} |u(x)||x|^{\frac{N-1}{2}}e^{|x|}\in(0,+\infty)$. Hence $\lambda_i\in(0,+\infty)$.
\end{remark}
The result in Theorem \ref{th1.3} is different from that in \cite{me} studying the case $p<\frac{N+\alpha+2}{N}$, where one considered the concentration behavior of minimizers as $c\rightarrow+\infty$. The concentration phenomena have also been studied in \cite{ms3} and \cite{css} by considering semiclassical limit of the Choquard equation
$$-\varepsilon^2\Delta u+Vu=\varepsilon^{-\alpha}(I_\alpha*|u|^p)|u|^{p-2}u~~~~\ \ \ \hbox{in}~~\R^N.$$
However, since the parameter is different, we need a different technique to obtain our result.

The main proof of Theorem \ref{th1.3} is based on optimal energy estimates of $e_c$ and $\int_{\R^N}|\nabla u_c|^2$ for each minimizer $u_c$. The main idea to prove Theorem \ref{th1.3} comes from \cite{gs}, which was restricted to the case of local nonlinearities. But due to the fact that our nonlinearity is nonlocal and that the assumption imposed on $(V)$ is more general than that in \cite{gs}, the method used in \cite{gs} can not be directly applied here. It needs some improvements and careful analysis. First, by choosing a suitable test function, we get that $0<e_c\leq C_1[1-(\frac{c}{c_*})^{\frac{2(\alpha+2)}{N}}]^{\frac{q}{q+2}}$ as $c\rightarrow (c_*)^-$ for some constant $C_1>0$ independent of $c$. The lower bound now is not optimal. The method in \cite{gs} by using the perturbation term $\int_{\R^N}V(x)u^2$ to remove the local nonlinearity term does not work in our cases. To obtain an optimal lower bound, we notice that $\int_{\R^N}|\nabla u_c|^2\rightarrow+\infty$ as $c\rightarrow (c_*)^-$, moreover,
$$\lim\limits_{c\rightarrow (c_*)^-}\frac{\frac{N}{N+\alpha+2}\int_{\R^N}(I_\alpha*|u_c|^{\frac{N+\alpha+2}{N}})|u_c|^{\frac{N+\alpha+2}{N}}}{\int_{\R^N}|\nabla u_c|^2}=1.$$
Then by taking a special $L^2$-preserving scaling as:
\begin{equation}\label{1.11}
w_c(x)=\varepsilon_c^{\frac{N}{2}}u_c(\varepsilon_c x+\varepsilon_cy_c),
\end{equation}
where
$$\varepsilon^2_c=\frac{2(N+\alpha+2)}{N\int_{\R^N}(I_\alpha*|u_c|^{\frac{N+\alpha+2}{N}})|u_c|^{\frac{N+\alpha+2}{N}}}\rightarrow0\ \ \ \hbox{as}\ c\rightarrow (c_*)^-$$
and the sequence $\{y_c\}$ is derived from the vanishing lemma, we succeeded in proving that there is a constant $C_2>0$ independent of $c$ such that $$\ds\int_{\R^N}V(\varepsilon_cx+\varepsilon_cy_c)|w_c(x)|^2\geq C_2\varepsilon_c^q\ \ \ \hbox{as}\ c\rightarrow (c_*)^-,$$
which and \eqref{1.5} implies that
$e_c\geq C_3[1-(\frac{c}{c_*})^{\frac{2(\alpha+2)}{N}}]^{\frac{q}{q+2}}$ for some constant $C_3>0$ independent of $c$. In succession, there exist two constants $0<C_4<C_5$ independent of $c$ such that $C_4[1-(\frac{c}{c_*})^{\frac{2(\alpha+2)}{N}}]^{\frac{-2}{q+2}}\leq \int_{\R^N}|\nabla u_c|^2\leq C_5[1-(\frac{c}{c_*})^{\frac{2(\alpha+2)}{N}}]^{\frac{-2}{q+2}}$. Finally, by using the Euler-Lagrange equation $u_c$ satisfied and the scaling \eqref{1.11} again with $\varepsilon_c=[1-(\frac{c}{c_*})^{\frac{2(\alpha+2)}{N}}]^{\frac{1}{q+2}}$, we show that
$$e_c\approx[1-(\frac{c}{c_*})^{\frac{2(\alpha+2)}{N}}]^{\frac{q}{q+2}}\frac{q+2}{q}\frac{\lambda^2c_*^2}{2}\left(\frac{N}{\alpha+2}\right)^{\frac{q}{q+2}}\ \ \hbox{as}\ c\rightarrow (c_*)^-,$$
which implies \eqref{1.10}.

Throughout this paper, we use standard notations. For simplicity, we
write $\int_{\Omega} h$ to mean the Lebesgue integral of $h(x)$ over
a domain $\Omega\subset\R^N$. $L^{p}:= L^{p}(\R^{N})~(1\leq
p<+\infty)$ is the usual Lebesgue space with the standard norm
$|\cdot|_{p}.$ We use `` $\rightarrow"$ and `` $\rightharpoonup"$ to denote the
strong and weak convergence in the related function space
respectively. $C$ will
denote a positive constant unless specified. We use `` $:="$ to denote definitions. We denote a subsequence
of a sequence $\{u_n\}$ as $\{u_n\}$ to simplify the notation unless
specified.

The paper is organized as follows. In Section 2,  we will determine the best constant for the interpolation estimate \eqref{1.5} and give the proof of Theorem \ref{th1.1}. In section 3, we prove Theorems \ref{th1.2} and \ref{th1.3}.

%\begin{center}
\section{ Proof of Theorem \ref{th1.1}}
%\end{center}
In this section, we first prove the interpolation estimate \eqref{1.5}. It is enough to consider the following minimization problem: $$S_p=\inf\limits_{u\in H^1(\R^N)\backslash\{0\}}W_p(u),$$ where
$$W_p(u)=\frac{\left(\int_{\R^N}|\nabla u|^2\right)^{\frac{Np-(N+\alpha)}{2}}\left(\int_{\R^N}|u|^2\right)^{\frac{N+\alpha-(N-2)p}{2}}}{\int_{\R^N}(I_\alpha*|u|^p)|u|^p}.$$

\begin{lemma}\label{lem2.1}(\cite{ms1}, Lemma 2.4)~~Let $N\geq1$, $\alpha\in (0,N)$, $p\in [1,\frac{2N}{N+\alpha})$ and $\{u_n\}$ be a bounded sequence in $L^{\frac{2Np}{N+\alpha}}(\R^N)$. If $u_n\rightarrow u$ a.e. in $\R^N$ as $n\rightarrow+\infty$, then
$$\lim\limits_{n\rightarrow+\infty}\left(\ds\int_{\R^N}(I_\alpha*|u_n|^p)|u_n|^p-\ds\int_{\R^N}(I_\alpha*|u_n-u|^p)|u_n-u|^p\right)
=\ds\int_{\R^N}(I_\alpha*|u|^p)|u|^p.$$
\end{lemma}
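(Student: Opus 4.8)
The plan is to treat this as a Brezis--Lieb type splitting for the nonlocal term. Writing $I_\alpha(x)=A_{N,\alpha}|x|^{\alpha-N}$ and absorbing the constant $A_{N,\alpha}$, the quantity $\int_{\R^N}(I_\alpha*|w|^p)|w|^p$ equals $A_{N,\alpha}D(|w|^p,|w|^p)$, where $D(f,g):=\ds\int_{\R^N}\int_{\R^N}\frac{f(x)g(y)}{|x-y|^{N-\alpha}}$ is the symmetric bilinear form appearing in \eqref{1.20} with $\lambda=N-\alpha$. Set $q:=\frac{2N}{N+\alpha}$; since $\alpha\in(0,N)$ we have $q\in(1,2)$, and \eqref{1.20} with $r=s=q$ gives $|D(f,g)|\le C|f|_q|g|_q$. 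Because $\{u_n\}$ is bounded in $L^{pq}=L^{\frac{2Np}{N+\alpha}}$, the families $a_n:=|u_n|^p$ and $b_n:=|u_n-u|^p$ are bounded in $L^q$, while $g:=|u|^p\in L^q$ by Fatou's lemma. With this notation the claim reads $D(a_n,a_n)-D(b_n,b_n)\to D(g,g)$.

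The heart of the matter is the pointwise Brezis--Lieb step: I would first prove that $r_n:=|u_n|^p-|u_n-u|^p\to g$ strongly in $L^q(\R^N)$. Writing $v_n:=u_n-u\to0$ a.e., the integrand $G_n:=\big||v_n+u|^p-|v_n|^p-|u|^p\big|$ tends to $0$ a.e. To upgrade this to $L^q$ convergence I would use the elementary inequality, valid for $p\ge1$: for every $\varepsilon>0$ there is $C_\varepsilon>0$ with $\big||x+y|^p-|x|^p\big|\le\varepsilon|x|^p+C_\varepsilon|y|^p$, applied with $x=v_n$, $y=u$, which gives $G_n\le\varepsilon|v_n|^p+(C_\varepsilon+1)|u|^p$. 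Setting $W_{n,\varepsilon}:=(G_n-\varepsilon|v_n|^p)_+\le(C_\varepsilon+1)|u|^p$, the majorant $|u|^{pq}$ is integrable, so $W_{n,\varepsilon}^q\to0$ a.e. and dominated convergence yields $\int_{\R^N}W_{n,\varepsilon}^q\to0$. Since $q>1$, one has $\int_{\R^N}G_n^q\le2^{q-1}\int_{\R^N}W_{n,\varepsilon}^q+2^{q-1}\varepsilon^q\sup_n|v_n|_{pq}^{pq}$, and letting first $n\to\infty$ and then $\varepsilon\to0$ gives $\int_{\R^N}G_n^q\to0$. This domination argument is the main obstacle, as it is precisely where the growth of $|t|^p$ and the uniform $L^{pq}$ bound are used.

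Finally I would expand the bilinear form using $a_n=b_n+r_n$:
$$
D(a_n,a_n)-D(b_n,b_n)=2D(b_n,r_n)+D(r_n,r_n).
$$
Since $r_n\to g$ in $L^q$ and $D$ is bounded and bilinear, $D(r_n,r_n)\to D(g,g)$, and $D(b_n,r_n-g)\le C|b_n|_q|r_n-g|_q\to0$. For the remaining term $D(b_n,g)=\int_{\R^N}b_n\,(I_\alpha*g)$ I would note that $I_\alpha*g\in L^{q'}$ with $q'=\frac{2N}{N-\alpha}$ by Hardy--Littlewood--Sobolev, so $D(\cdot,g)$ is a bounded linear functional on $L^q$; as $v_n\to0$ a.e. with $\{b_n\}$ bounded in the reflexive space $L^q$, we obtain $b_n\rightharpoonup0$ weakly in $L^q$, whence $D(b_n,g)\to0$. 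Combining, $D(b_n,r_n)\to0$, and therefore $D(a_n,a_n)-D(b_n,b_n)\to D(g,g)$, which is the assertion after restoring the constant $A_{N,\alpha}$.
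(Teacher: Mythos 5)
Your proposal is correct. The paper itself gives no proof of this lemma (it is quoted directly from \cite{ms1}, Lemma 2.4), and your argument is essentially the standard one used there: the Brezis--Lieb refinement $|u_n|^p-|u_n-u|^p\to|u|^p$ strongly in $L^{\frac{2N}{N+\alpha}}(\R^N)$, followed by the symmetric bilinear expansion of the Riesz-potential form, with Hardy--Littlewood--Sobolev boundedness and the weak convergence $|u_n-u|^p\rightharpoonup 0$ in $L^{\frac{2N}{N+\alpha}}(\R^N)$ handling the cross term.
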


\begin{lemma}\label{lem2.6} (\cite{wi}, Vanishing Lemma)\ \
Let $r>0$ and $2\leq q<2^*$. If $\{u_n\}$ is bounded in
$H^1(\R^N)$ and
$$\sup\limits_{y\in\R^N}\ds\int_{B_r(y)}|u_n|^q\rightarrow0,~~n\rightarrow+\infty,$$
then $u_n\rightarrow0$ in $L^s(\R^N)$ for $2<s<2^*$.
\end{lemma}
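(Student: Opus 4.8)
The plan is to reduce the statement to convergence in a single, well-chosen Lebesgue space and then to propagate it across the whole range $(2,2^*)$ by interpolation. Since $\{u_n\}$ is bounded in $H^1(\R^N)$, it is bounded in both $L^2(\R^N)$ and $L^{2^*}(\R^N)$; hence, if I can show $u_n\to0$ in $L^{s_0}(\R^N)$ for one exponent $s_0\in(2,2^*)$, then for any other $s\in(2,2^*)$ the interpolation inequality $|u_n|_s\le|u_n|_{s_1}^{1-\beta}|u_n|_{s_2}^{\beta}$, with $\{s_1,s_2\}=\{2,s_0\}$ or $\{s_0,2^*\}$ chosen so that $s$ lies between them, forces $|u_n|_s\to0$ because the two endpoint norms stay bounded. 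So the full lemma follows from a single convergence statement.

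Second, I would set up a local interpolation-plus-Sobolev estimate on a fixed ball $B=B(y,r)$. For $q<s_0<2^*$ the interpolation inequality gives $|u|_{L^{s_0}(B)}\le|u|_{L^q(B)}^{1-\theta}|u|_{L^{2^*}(B)}^{\theta}$ with $\frac{1}{s_0}=\frac{1-\theta}{q}+\frac{\theta}{2^*}$. The \emph{key device} is to choose $s_0$ so that the exponent on the critical factor equals exactly $2$, i.e. to impose $\theta s_0=2$; solving this together with the interpolation relation yields $s_0=2+q\,\frac{2^*-2}{2^*}$, which one checks satisfies $q<s_0<2^*$. Combining with the Sobolev embedding $H^1(B)\hookrightarrow L^{2^*}(B)$, whose constant is independent of the center $y$ by translation invariance, gives on each ball $\int_B|u|^{s_0}\le C\,|u|_{L^q(B)}^{(1-\theta)s_0}\,\|u\|_{H^1(B)}^{2}$.

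Third, I would cover $\R^N$ by balls $B_i=B(y_i,r)$ of uniformly bounded overlap, each point lying in at most $k_N$ of them. Writing $\delta_n:=\sup_{y}\int_{B(y,r)}|u_n|^q\to0$, I pull the uniform smallness $|u_n|_{L^q(B_i)}^{(1-\theta)s_0}\le\delta_n^{(1-\theta)s_0/q}$ out of the sum, and use the bounded overlap to control $\sum_i\|u_n\|_{H^1(B_i)}^2\le k_N\|u_n\|_{H^1(\R^N)}^2$. This produces $\int_{\R^N}|u_n|^{s_0}\le C\,k_N\,\delta_n^{(1-\theta)s_0/q}\,\|u_n\|_{H^1}^{2}$. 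Since $\{u_n\}$ is $H^1$-bounded, $\delta_n\to0$, and $(1-\theta)s_0/q>0$, the right-hand side tends to $0$; hence $u_n\to0$ in $L^{s_0}$, and the first step closes the argument.

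I expect the two delicate points to be: (a) selecting the interpolation exponent so that the critical-growth factor carries power exactly $2$, which is precisely what makes the overlapping-balls sum convergent through the single additive $H^1$ bound, since any other power of $\|u_n\|_{H^1(B_i)}$ fails to sum; and (b) the bounded-overlap covering together with the translation-uniform Sobolev constant on balls. A minor point to treat separately is $N=1,2$, where $2^*=+\infty$: there I would replace $2^*$ throughout by any fixed finite exponent $\bar s>\max\{q,s_0\}$, use $H^1(B)\hookrightarrow L^{\bar s}(B)$, and repeat the computation verbatim.
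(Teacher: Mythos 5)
Your proof is correct and is essentially the same argument the paper relies on: the lemma is quoted without proof from Willem's book, and your normalization $\theta s_0=2$ (giving exactly Willem's exponent $s_0=2+q\,\frac{2^*-2}{2^*}$), the bounded-overlap covering with $\sum_i\|u_n\|_{H^1(B_i)}^2\le k_N\|u_n\|_{H^1(\R^N)}^2$, and the final endpoint interpolation using the $L^2$ and $L^{2^*}$ bounds reproduce the classical proof of Willem's Lemma 1.21 step for step. The only cosmetic blemish is the circular wording in the $N=1,2$ remark (you require $\bar s>\max\{q,s_0\}$ although $s_0$ is defined in terms of $\bar s$); this is harmless, since one fixes any finite $\bar s>q$ first, sets $s_0=2+q\,\frac{\bar s-2}{\bar s}$, and then covers all $2<s<\infty$ by interpolating against the $H^1$-induced bound in $L^{s'}$ for arbitrary finite $s'$.
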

\begin{lemma}\label{lem2.2}~~Let $N\geq1$, $\alpha\in(0,N)$ and $\frac{N+\alpha}{N}<p<\frac{N+\alpha}{(N-2)_+}$, then
$S_{p}$ is achieved by a function $Q_p\in H^1(\R^N)\backslash\{0\}$, where $Q_p$ is a nontrivial solution of equation \eqref{1.6}
and $$S_{p}=\frac{|Q_p|_2^{2p-2}}{p}.$$
\end{lemma}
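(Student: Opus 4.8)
The plan is to treat $S_p$ as a scale-invariant minimization problem, extract a minimizer by concentration-compactness, and then recover the Euler--Lagrange equation \eqref{1.6} by rescaling. Throughout write $a=\frac{Np-(N+\al)}{2}$ and $b=\frac{N+\al-(N-2)p}{2}$, so that $a,b>0$ in the admissible range and $a+b=p$. A direct computation shows that $W_p$ is invariant under translations and under the two-parameter family $u\mapsto\mu u(\la\cdot)$; this invariance is simultaneously the source of the noncompactness and the device that normalizes minimizing sequences.

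First I would check $S_p>0$. Applying the Hardy--Littlewood--Sobolev inequality \eqref{1.20} with $f=g=|u|^p$ and exponent $N-\al$ gives $\int_{\R^N}(I_\al*|u|^p)|u|^p\le C|u|_{\frac{2Np}{N+\al}}^{2p}$, and the hypothesis $\frac{N+\al}{N}<p<\frac{N+\al}{(N-2)_+}$ places $\frac{2Np}{N+\al}$ strictly between $2$ and $2^*$, so the Gagliardo--Nirenberg inequality bounds the right-hand side by $C(\int|\nabla u|^2)^a(\int|u|^2)^b$; rearranged, this is exactly $W_p(u)\ge 1/C>0$.

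For existence, take a minimizing sequence and use the scaling $u\mapsto\mu u(\la\cdot)$ to normalize $\int|\nabla u_n|^2=\int|u_n|^2=1$, so that $\{u_n\}$ is bounded in $H^1(\R^N)$ and $\int(I_\al*|u_n|^p)|u_n|^p\to 1/S_p$. If $\sup_y\int_{B_r(y)}|u_n|^2\to 0$, the Vanishing Lemma \ref{lem2.6} would force $u_n\to 0$ in $L^{\frac{2Np}{N+\al}}(\R^N)$, hence the denominator to vanish, a contradiction; so after a translation (under which $W_p$ is invariant) we may assume $u_n\rightharpoonup u\ne 0$ in $H^1(\R^N)$ and $u_n\to u$ a.e. Setting $v_n=u_n-u$, the Brezis--Lieb lemma for the $H^1$-norms together with Lemma \ref{lem2.1} for the nonlocal term split the three quantities additively; writing $t=\int|\nabla u|^2$ and $s=\int|u|^2$, the bound $W_p\ge S_p$ applied to $u$ and to $v_n$ yields $1\le t^as^b+(1-t)^a(1-s)^b$. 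The main obstacle is to upgrade this to strong convergence, which I would resolve by the reverse subadditivity: Young's inequality with conjugate exponents $p/a,p/b$ gives $t^as^b+(1-t)^a(1-s)^b\le\frac{a}{p}[t^p+(1-t)^p]+\frac{b}{p}[s^p+(1-s)^p]\le 1$, with equality only at $(t,s)\in\{(0,0),(1,1)\}$. Since $u\ne 0$ forces $s>0$, we conclude $t=s=1$, hence $v_n\to 0$ in $H^1(\R^N)$ and $u$ attains $S_p$.

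Finally, the equation and the value. As a minimizer, $u$ satisfies $W_p'(u)=0$; computing the derivative and using $\int|\nabla u|^2=\int|u|^2=1$ shows $u$ solves $-a\Delta u+bu=pS_p\,(I_\al*|u|^p)|u|^{p-2}u$, and rescaling the amplitude by $Q_p=\mu u$ with $\mu^{2p-2}=pS_p$ absorbs the constant, producing a nontrivial solution $Q_p$ of \eqref{1.6} which by scale invariance still satisfies $W_p(Q_p)=S_p$. To evaluate $S_p$ I would combine the Nehari identity (testing \eqref{1.6} against $Q_p$), $aA+bB=C$, with the Pohozaev identity for the Choquard equation, $\frac{N-2}{2}aA+\frac{N}{2}bB=\frac{N+\al}{2p}C$, where $A=\int|\nabla Q_p|^2$, $B=\int|Q_p|^2$, $C=\int(I_\al*|Q_p|^p)|Q_p|^p$. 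Eliminating $C$ and using $2a=Np-(N+\al)$ and $2b=N+\al-(N-2)p$ collapses the two relations to $A=B$, whence $C=(a+b)A=pA$ and $S_p=W_p(Q_p)=A^{a+b}/(pA)=A^{p-1}/p=|Q_p|_2^{2p-2}/p$.
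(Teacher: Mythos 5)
Your proposal is correct and follows essentially the same route as the paper's proof: normalize the minimizing sequence by scaling so that $\int_{\R^N}|\nabla u_n|^2=\int_{\R^N}|u_n|^2=1$, rule out vanishing via Lemma \ref{lem2.6} and the Hardy--Littlewood--Sobolev inequality, translate to get a nonzero weak limit, and split the gradient, mass and nonlocal terms by Brezis--Lieb and Lemma \ref{lem2.1} before deriving the Euler--Lagrange equation and rescaling to \eqref{1.6}. The only differences are local: your Young's-inequality analysis of $1\le t^{a}s^{b}+(1-t)^{a}(1-s)^{b}$ spells out the equality discussion that the paper leaves implicit when it asserts $|\nabla v_p|_2=|v_p|_2=1$ (a welcome clarification), while your Nehari--Pohozaev evaluation of $S_p$ is a correct but unnecessary detour, since with your normalization $|u|_2=1$ and $Q_p=\mu u$, $\mu^{2p-2}=pS_p$, one reads off $|Q_p|_2^{2p-2}=\mu^{2p-2}=pS_p$ directly, which is exactly the paper's one-line computation.
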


\begin{proof}~~The lemma can be viewed as a consequence of Proposition 2.1 in \cite{ms1} and Theorem 9 in \cite{lieb}, but we give an alternative proof here. The idea of the proof comes from \cite{we}, but some details are delicate.

Since $W_{p}(u)\geq0$ for any $u\in H^1(\R^N)\backslash\{0\}$, $S_{p}$ is well defined. Let $\{u_n\}\subset H^1(\R^N)\backslash\{0\}$ be a minimizing sequence for $S_p$, i.e. $W_{p}(u_n)\rightarrow S_{p}$ as $n\rightarrow+\infty$. Set $$\lambda_n:=\frac{(\int_{\R^N}|u_n|^2)^{\frac{N-2}{4}}}{(\int_{\R^N}|\nabla u_n|^2)^{\frac{N}{4}}},\, \, \, \, \, \, \, \, \, \, \, \, \, \, \, \,  \mu_n:=\frac{(\int_{\R^N}|u_n|^2)^{\frac12}}{(\int_{\R^N}|\nabla u_n|^2)^{\frac12}}$$
and $$v_n(x):=\lambda_nu_n(\mu_nx).$$
Then $\int_{\R^N}|v_n|^2=\int_{\R^N}|\nabla v_n|^2=1$ and \begin{equation}\label{2.1}
W_{p}(v_n)=W_{p}(u_n)\rightarrow S_{p}\ \ \ \hbox{as}\ \ n\rightarrow+\infty,
\end{equation}
i.e. $\{v_n\}$ is a bounded minimizing sequence for $S_{p}$.

Let
$\delta:=\lim\limits_{n\rightarrow+\infty}\sup\limits_{y\in\R^N}\int_{B_1(y)}|v_n|^2.$
If $\delta=0$, then by Lemma \ref{lem2.6}, $v_n\rightarrow 0$ in $L^s(\R^N)$, $2<s<2^*$. Hence by the Hardy-Littlewood-Sobolev inequality \eqref{1.20}, $$W_{p}(v_n)=\frac{1}{\int_{\R^N}(I_\alpha*|v_n|^p)|v_n|^p}\rightarrow+\infty,$$
which contradicts \eqref{2.1}. Therefore, $\delta>0$ and there exists a sequence $\{y_n\}\subset\R^N$ such that \begin{equation}\label{2.2}
\int_{B_1(y_n)}|v_n|^2\geq\frac{\delta}{2}>0.
\end{equation}
Up to translations, we may assume that $y_n=0$. Since $\{v_n\}$ is bounded in $H^1(\R^N)$ and by \eqref{2.2}, there exists $v_{p}\in H^1(\R^N)\backslash\{0\}$ such that $v_n\rightharpoonup v_{p}$ in $H^1(\R^N)$. Then by the Brezis Lemma and Lemma \ref{lem2.1}, we have
$$\begin{array}{ll}
S_{p} &\leq W_{p}(v_{p})\\[5mm]
&\leq\lim\limits_{n\rightarrow+\infty}\ds\left[W_{p}(v_n)\frac{\int_{\R^N}(I_\alpha*|v_n|^p)|v_n|^p}{\int_{\R^N}(I_\alpha*|v_{p}|^p)|v_{p}|^p}-
W_{p}(v_n-v_{p})\frac{\int_{\R^N}(I_\alpha*|v_n-v_{p}|^p)|v_n-v_{p}|^p}{\int_{\R^N}(I_\alpha*|v_p|^p)|v_{p}|^p}\right]\\[5mm]
&\leq S_{p} \lim\limits_{n\rightarrow+\infty}\ds\left(\frac{\int_{\R^N}(I_\alpha*|v_n|^p)|v_n|^p-\int_{\R^N}(I_\alpha*|v_n-v_p|^p)|v_n-v_{p}|^p}{\int_{\R^N}(I_\alpha*|v_p|^p)|v_{p}|^p}\right)\\[5mm]
&=S_{p},
\end{array}$$
i.e. $W_{p}(v_{p})=S_{p}.$ Moreover, $|\nabla v_{p}|_2=|v_{p}|_2=1$ and $S_p=\frac{1}{\int_{\R^N}(I_\alpha*|v_{p}|^p)|v_{p}|^p}.$

Therefore, for any $h\in H^1(\R^N)$, $\frac{d}{dt}\Big{|}_{t=0}W_{p}(v_p+th)=0$, i.e. $v_p$ satisfies the following equation
$$-[Np-(N+\alpha)]\Delta v_p+[N+\alpha-(N-2)p]v_p=2pS_{p}(I_\alpha*|v|^p)|v_p|^{p-2}v_p,~~~\hbox{in}~~\R^N.$$
Let $v_p=(\frac{1}{pS_{p}})^{\frac{1}{2p-2}}Q_p$, then $Q_p$ is a nontrivial solution of \eqref{1.6} and $S_{p}=\frac{|Q_p|_2^{2p-2}}{p}$.\\
\end{proof}

Next we give the proof of Theorem \ref{th1.1}. For any $u\in S(c)$, set
$$A(u):=\ds\int_{\R^N}|\nabla u|^2,\, \, \, \, \, \, \, \, \, \, B(u):=\ds\int_{\R^N}(I_\alpha*|u|^p)|u|^p,$$
then $I_p(u)=\frac{1}{2}A(u)-\frac{1}{2p}B(u).$ It follows from \eqref{1.5}\eqref{1.6} that for $\frac{N+\al}{N}<p<\frac{N+\al}{(N-2)_+}$,
\begin{equation}\label{2.4}
B(u)\leq \frac{p}{|Q_p|_2^{2p-2}}A(u)^{\frac{Np-(N+\alpha)}{2}}c^{N+\alpha-(N-2)p}
\end{equation}
with equality for $u=Q_p$ given in \eqref{1.6}, moreover,
\begin{equation}\label{2.5}
A(Q_p)=\frac{1}{p}B(Q_p)=|Q_p|_2^2.
\end{equation}

\begin{lemma}\label{lem2.3}~~Let $N\geq1$ and $\al\in(0,N)$.

$(1)$\ \ If $\frac{N+\alpha}{N}<p<\frac{N+\alpha+2}{N}$, then $I_p(u)$ is bounded from below and coercive on $S(c)$ for all $c>0$, moreover, $I_{p}(c^2)<0$.

$(2)$\ \ If $p=\frac{N+\alpha+2}{N}$, then $I_{\frac{N+\alpha+2}{N}}(c^2)=\left\{%
\begin{array}{ll}
    0, & 0<c\leq c_*:=|Q_{\frac{N+\alpha+2}{N}}|_2, \\
   -\infty, & c>c_*,\\
\end{array}%
\right.$

$(3)$\ \ If $\frac{N+\alpha+2}{N}<p<\frac{N+\alpha}{(N-2)_+}$, then $I_{p}(c^2)=-\infty$ for all $c>0$.
\end{lemma}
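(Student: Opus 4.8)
The unifying device will be the $L^2$-preserving dilation $u_t(x):=t^{N/2}u(tx)$, which keeps $|u_t|_2=|u|_2$ while scaling the two relevant functionals homogeneously. A direct change of variables gives $A(u_t)=t^2A(u)$ and $B(u_t)=t^{2\ga}B(u)$, where I abbreviate $\ga:=\frac{Np-(N+\al)}{2}$, so that
\[
I_p(u_t)=\frac{t^2}{2}A(u)-\frac{t^{2\ga}}{2p}B(u).
\]
The whole lemma then reduces to comparing the powers $t^2$ and $t^{2\ga}$: one checks that $\ga<1$ precisely when $p<\frac{N+\al+2}{N}$, that $\ga=1$ at the critical exponent, and that $\ga>1$ when $p>\frac{N+\al+2}{N}$. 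I will also use the sharp inequality \eqref{2.4} throughout to control $B$ by $A$.

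For part $(1)$ ($0<\ga<1$) I would first combine \eqref{2.4} with the elementary fact that $s\mapsto\frac12 s-Cs^{\ga}$ is bounded below and tends to $+\infty$ as $s\to+\infty$; applied to $s=A(u)$ this yields both coercivity and boundedness from below of $I_p$ on $S(c)$ (recall $A(u)=\|u\|_{H^1}^2-c^2$ on $S(c)$). For the sign, I fix any $u\in S(c)$ and let $t\to0^+$ in the displayed identity: since $2\ga<2$ the term $-\frac{t^{2\ga}}{2p}B(u)$ dominates, so $I_p(u_t)<0$ for small $t$ and hence $I_p(c^2)<0$. Part $(3)$ ($\ga>1$) is the mirror image: for any fixed $u\in S(c)$ I send $t\to+\infty$, and now $2\ga>2$ forces $I_p(u_t)\to-\infty$, giving $I_p(c^2)=-\infty$.

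The critical case $(2)$ ($\ga=1$) is where the sharp constant enters decisively. Here a short computation at $p=\frac{N+\al+2}{N}$ shows $N+\al-(N-2)p=2p-2$, so that $|Q_p|_2^{2p-2}=c_*^{2p-2}$ and \eqref{2.4} collapses to $B(u)\le p(c/c_*)^{2p-2}A(u)$, whence
\[
I_p(u)\ge\frac12 A(u)\Big[1-\Big(\tfrac{c}{c_*}\Big)^{2p-2}\Big].
\]
For $0<c\le c_*$ the bracket is nonnegative, so $I_p(c^2)\ge0$; since $\ga=1$ gives $I_p(u_t)=t^2 I_p(u)\to0$ as $t\to0^+$, the infimum is exactly $0$, and at $c=c_*$ it is attained by $Q_p$ because $A(Q_p)=\frac1pB(Q_p)$ forces $I_p(Q_p)=0$. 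For $c>c_*$ I test with the amplitude scaling $u_0:=(c/c_*)Q_p\in S(c)$: using $B(Q_p)=pA(Q_p)$ I get $I_p(u_0)=\frac12 A(Q_p)(c/c_*)^2[1-(c/c_*)^{2p-2}]<0$, and then $I_p((u_0)_t)=t^2I_p(u_0)\to-\infty$ as $t\to+\infty$ yields $I_p(c^2)=-\infty$.

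The only genuinely delicate point is the exponent bookkeeping in case $(2)$, namely verifying $N+\al-(N-2)p=2p-2$ at the critical exponent so that the constant in \eqref{2.4} is exactly $c_*^{2p-2}$; this is what makes the bracket change sign precisely at $c=c_*$ and is the hinge of the entire trichotomy. The remaining steps are routine scaling arguments built on the single identity $I_p(u_t)=\tfrac{t^2}2A(u)-\tfrac{t^{2\ga}}{2p}B(u)$.
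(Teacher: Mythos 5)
Your proposal is correct and follows essentially the same route as the paper: the same $L^2$-preserving dilation $u_t(x)=t^{N/2}u(tx)$, the same use of the sharp inequality \eqref{2.4} to get lower bounds and coercivity when $Np-(N+\alpha)<2$, the same small-$t$/large-$t$ sign analysis, and the same test function (the paper's $Q^t=\frac{ct^{N/2}}{c_*}Q_{\frac{N+\alpha+2}{N}}(tx)$ is exactly your amplitude rescaling $(c/c_*)Q_p$ followed by dilation) for the case $c>c_*$. Your exponent check $N+\alpha-(N-2)p=2p-2=\frac{2(\alpha+2)}{N}$ at the critical exponent is exactly the bookkeeping implicit in the paper's display \eqref{2.6}--\eqref{2.7}, so no substantive difference remains.
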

\begin{proof}~~$(1)$~~For any $c>0$ and $u\in S(c)$, by \eqref{2.4}, there exists $C:=\frac{c^{N+\alpha-(N-2)p}}{|Q_p|_2^{2p-2}}$ such that
\begin{equation}\label{2.6}
I_p(u)\geq \frac{A(u)-
CA(u)^{\frac{Np-(N+\alpha)}{2}}}{2}.
\end{equation}
Since $\frac{N+\alpha}{N}<p<\frac{N+\alpha+2}{N}$, $0<Np-(N+\alpha)<2$. Then \eqref{2.6} implies that $I_p(u)$ is bounded from below and coercive on $S(c)$ for any $c>0$.

 Set $u^t(x):=t^{\frac{N}{2}}u(tx)$ with $t>0$, then $u^t\in S(c)$ and
\begin{equation}\label{2.7}
I_p(u^t)=\frac{t^2}{2}A(u)-\frac{t^{Np-(N+\alpha)}}{2p}B(u)<0\ \ \ \ \ \hbox{for}\ t>0\ \hbox{small~enough}\end{equation}
since $0<Np-(N+\alpha)<2$, which implies that $I_{p}(c^2)<0$ for each $c>0$.

$(2)$~~When $p=\frac{N+\alpha+2}{N}$, $Np-(N+\alpha)=2$, similarly to \eqref{2.6} and \eqref{2.7}, we have
$$I_{\frac{N+\alpha+2}{N}}(u)\geq \frac{A(u)}{2}\left[1-\left(\frac{c}{c_*}\right)^{\frac{2(\alpha+2)}{N}}\right]\geq0\ \ \ \ \hbox{if}\ \ \ 0<c\leq c_*$$
and $I_{\frac{N+\alpha+2}{N}}(c^2)\leq I_{\frac{N+\alpha+2}{N}}(u^t)\rightarrow 0$ as $t\rightarrow 0^+$ for all $c$. Then $I_{\frac{N+\alpha+2}{N}}(c^2)=0$ if $0<c\leq c_*$.

If $c>c_*$, set $Q^t(x):=\frac{ct^{\frac{N}{2}}}{c_*}Q_{\frac{N+\alpha+2}{N}}(tx)$, then by \eqref{2.5}, $$I_{\frac{N+\alpha+2}{N}}(Q^t)=\frac{c^2t^2}{2c_*^2}\left[1-\left(\frac{c}{c_*}\right)^{\frac{2(\alpha+2)}{N}}\right]\rightarrow -\infty\ \ \hbox{as}\ \  t\rightarrow+\infty,$$ then $I_{\frac{N+\alpha+2}{N}}(c^2)=-\infty$ for $c>c_*$.

$(3)$~~If $\frac{N+\alpha+2}{N}<p<\frac{N+\alpha}{(N-2)_+}$, then $Np-(N+\alpha)>2$, hence by \eqref{2.7}, we have $I_p(u^t)\rightarrow -\infty$ as $t\rightarrow+\infty$, so $I_{p}(c^2)=-\infty$ for all $c>0$.
\end{proof}

\begin{lemma}\label{lem2.4}~~If $\frac{N+\alpha}{N}<p<\frac{N+\alpha+2}{N}$, then

$(1)$~~the function $c\mapsto I_p(c^2)$ is continuous on $(0,+\infty)$;

$(2)$~~
\begin{equation}\label{2.8}
I_p(c^2)<I_p(\alpha^2)+I_p(c^2-\alpha^2),\ \ \ \ \ \ \forall\ 0<\alpha<c<+\infty.
\end{equation}

\end{lemma}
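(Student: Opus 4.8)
\textbf{Strategy overview.} The two parts of Lemma \ref{lem2.4} are standard structural facts about the ground-state-energy map in the subcritical regime $\frac{N+\alpha}{N}<p<\frac{N+\alpha+2}{N}$, and my plan is to derive both from a single scaling observation: the \emph{strict subadditivity under dilation of the constraint}. The key elementary fact I would establish first is the strict homogeneity inequality
\begin{equation}\label{2.9}
I_p((\theta c)^2)<\theta^2\, I_p(c^2),\qquad \forall\,\theta>1,\ c>0.
\end{equation}
To see this, fix $u\in S(c)$ and consider the $L^2$-rescaling $u_\theta(x):=\theta\,u(x)$, so that $u_\theta\in S(\theta c)$, with $A(u_\theta)=\theta^2 A(u)$ and $B(u_\theta)=\theta^{2p}B(u)$. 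Then
$$
I_p(u_\theta)=\frac{\theta^2}{2}A(u)-\frac{\theta^{2p}}{2p}B(u)
=\theta^2 I_p(u)-\frac{\theta^2(\theta^{2p-2}-1)}{2p}B(u).
$$
Since $2p-2>0$ and $B(u)>0$ for $u\neq 0$, the correction term is strictly negative for $\theta>1$; taking the infimum over $u\in S(c)$ (and using that $I_p(c^2)<0$ is attained in the limit along a minimizing sequence with $B$ bounded away from $0$, which I will justify from coercivity and $I_p(c^2)<0$ in Lemma \ref{lem2.3}) yields \eqref{2.9}. This is the one genuinely nontrivial point, because I must ensure the subtracted $B$-term does not degenerate to zero along minimizing sequences.

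\textbf{Part (2), strict subadditivity.} Granting \eqref{2.9}, the split inequality \eqref{2.8} follows by a now-classical argument. Given $0<\alpha<c$, set $\theta_1=c/\alpha>1$ and $\theta_2=c/\sqrt{c^2-\alpha^2}>1$; then $\alpha=c/\theta_1$ and $\sqrt{c^2-\alpha^2}=c/\theta_2$. Applying \eqref{2.9} in the form $I_p(c^2)=I_p((\theta_1\alpha)^2)<\theta_1^2 I_p(\alpha^2)$ and likewise for the second piece, together with the convexity-type bookkeeping $\frac{\alpha^2}{c^2}+\frac{c^2-\alpha^2}{c^2}=1$, I would write
$$
I_p(c^2)=\frac{\alpha^2}{c^2}\,I_p(c^2)+\frac{c^2-\alpha^2}{c^2}\,I_p(c^2)
<I_p(\alpha^2)+I_p(c^2-\alpha^2),
$$
where each strict inequality uses \eqref{2.9} with the relevant $\theta>1$ and the negativity $I_p(\cdot)<0$ from Lemma \ref{lem2.3}(1). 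The strictness is preserved because at least one factor genuinely activates the strict homogeneity bound.

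\textbf{Part (1), continuity.} For continuity of $c\mapsto I_p(c^2)$ on $(0,+\infty)$, I would argue local Lipschitz/two-sided comparison using the same dilation $u\mapsto (c'/c)u$. Fix $c$ and let $c'\to c$. From the rescaling identity above, $I_p((c')^2)\le (c'/c)^2 I_p(c^2)+o(1)$ by choosing a near-minimizer $u$ for $I_p(c^2)$ (with $A(u),B(u)$ bounded, again by coercivity), and symmetrically $I_p(c^2)\le (c/c')^2 I_p((c')^2)+o(1)$; combining these sandwiches $I_p((c')^2)$ between quantities converging to $I_p(c^2)$ as $c'\to c$, giving continuity. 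The main obstacle throughout is the same: controlling $B(u)$ from below (equivalently, ruling out vanishing) along the relevant approximating sequences so that the strict term in \eqref{2.9} does not collapse; I expect to handle this by invoking the coercivity of $I_p$ on $S(c)$ from Lemma \ref{lem2.3}(1) together with the fact that $I_p(c^2)<0$ forces $B(u)$ to stay bounded below along minimizing sequences.
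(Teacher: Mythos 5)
Your proposal is correct and, on part (2), coincides with the paper's own proof: the paper likewise scales a minimizing sequence by $u_n\mapsto\theta u_n$, uses $I_p(c^2)<0$ to secure a uniform lower bound $B(u_n)\geq K_1>0$ (so the strict correction term does not collapse), and obtains the strict homogeneity inequality $I_p(\theta^2c^2)<\theta^2 I_p(c^2)$ for $\theta>1$, from which \eqref{2.8} follows; your splitting $I_p(c^2)=\frac{\alpha^2}{c^2}I_p(c^2)+\frac{c^2-\alpha^2}{c^2}I_p(c^2)$ is precisely the ``easy implication'' the paper leaves unwritten. On part (1) the paper gives no argument at all (it defers to Theorem 2.1 of Bellazzini--Siciliano), so your two-sided scaling sandwich supplies the omitted proof, and it is sound once you note that the constant in the coercivity bound \eqref{2.6} depends continuously on the mass, so near-minimizers of $I_p((c')^2)$ have $A$ and $B$ bounded uniformly for $c'$ in a neighborhood of $c$.
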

\begin{proof}~~The proof of $(1)$ follows from Lemma \ref{lem2.3} and is similar to that of Theorem 2.1 in \cite{bs}, so we omit it.

$(2)$~~For any $c>0$, let $\{u_n\}\subset S(c)$ be a minimizing sequence for $I_p(c^2)<0$, then by Lemma \ref{lem2.3}, $\{u_n\}$ is bounded in $H^1(\R^N)$ and there exists a constant $K_1>0$ independent of $n$ such that $B(u_n)\geq K_1$. Set $u_n^\theta=\theta u_n$ with $\theta>1$, then $u_n^\theta\in S(\theta c)$ and
$$I_p(u_n^\theta)-\theta^2 I(u_n)=\frac{\theta^2-\theta^{2p}}{2p}B(u_n)\leq \frac{\theta^2-\theta^{2p}}{2p}K_1<0.$$
Letting $n\rightarrow+\infty$, we have $I_p(\theta^2c^2)<I_p(c^2)$, $\theta>1$, which easily implies \eqref{2.8} by using Lemma \ref{lem2.3} (1).
\end{proof}

\begin{lemma}\label{lem2.5}~~Let $N\geq1$, $\al\in(0,N)$ and $\frac{N+\alpha}{N}<p<\frac{N+\alpha}{(N-2)_+}$. If $u$ is a critical point of $I_p(u)$ constrained on $S(c)$, then there exists $\mu_c<0$ such that $I_p'(u)-\mu_c u=0$ in $H^{-1}(\R^N)$ and
$$A(u)-\frac{Np-(N+\alpha)}{2p}B(u)=0.$$
\end{lemma}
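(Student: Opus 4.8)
The plan is to combine the Lagrange multiplier rule with two integral identities — a Nehari-type identity obtained by testing the Euler--Lagrange equation against $u$, and a Pohozaev-type identity obtained from the $L^2$-preserving dilation $u^t(x):=t^{\frac N2}u(tx)$ — and then to read off the sign of $\mu_c$ from the admissible range of $p$. First, since $u$ is a critical point of $I_p$ restricted to the $L^2$-sphere $S(c)$, the Lagrange multiplier theorem yields some $\mu_c\in\R$ with $I_p'(u)-\mu_c u=0$ in $H^{-1}(\R^N)$; equivalently $u$ solves $-\Delta u-\mu_c u=(I_\alpha*|u|^p)|u|^{p-2}u$. Pairing this equation with $u$ gives the Nehari identity $A(u)-\mu_c c^2=B(u)$, so that $\mu_c c^2=A(u)-B(u)$.

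Next, to obtain the Pohozaev identity, I would use the family $u^t\in S(c)$ with $A(u^t)=t^2A(u)$ and $B(u^t)=t^{Np-(N+\alpha)}B(u)$, already recorded in \eqref{2.7}, so that $f(t):=I_p(u^t)=\frac{t^2}{2}A(u)-\frac{t^{Np-(N+\alpha)}}{2p}B(u)$ is a smooth function of $t>0$. Since $\{u^t\}_{t>0}$ is a curve in $S(c)$ through $u=u^1$ and $u$ is a constrained critical point, $t=1$ must be a critical point of $f$; differentiating and setting $f'(1)=0$ gives precisely
$$A(u)-\frac{Np-(N+\alpha)}{2p}B(u)=0,$$
which is the second assertion of the lemma.

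Finally I would combine the two identities. The Pohozaev identity gives $B(u)=\frac{2p}{Np-(N+\alpha)}A(u)$, whence
$$\mu_c c^2=A(u)\Big(1-\frac{2p}{Np-(N+\alpha)}\Big)=A(u)\,\frac{(N-2)p-(N+\alpha)}{Np-(N+\alpha)}.$$
Because $p>\frac{N+\alpha}{N}$ the denominator is positive, because $p<\frac{N+\alpha}{(N-2)_+}$ the numerator $(N-2)p-(N+\alpha)$ is negative (trivially so when $N\le 2$, where $(N-2)p\le 0<N+\alpha$), and $A(u)>0$ since $u\ne 0$; hence $\mu_c<0$.

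The only delicate point — and hence the main obstacle — is justifying that $f'(1)=0$, i.e. that constrained criticality transfers to stationarity along the dilation path. Rigorously this requires the tangent vector $\frac N2 u+x\cdot\nabla u=\partial_t u^t|_{t=1}$ to be an admissible test function in $H^1(\R^N)$; this follows from the regularity and decay of solutions of the Euler--Lagrange equation (elliptic regularity, as used elsewhere in the paper), after which $f'(1)=\langle I_p'(u),\partial_t u^t|_{t=1}\rangle=\mu_c\langle u,\partial_t u^t|_{t=1}\rangle=\mu_c\,\frac{d}{dt}\big|_{t=1}\tfrac12|u^t|_2^2=0$ because $|u^t|_2\equiv c$. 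Alternatively one may bypass this entirely by deriving the Pohozaev identity directly from the PDE via the standard multiplier $x\cdot\nabla u$, which is the route I would fall back on if the admissibility of the dilation tangent is not already available from the paper's regularity statements.
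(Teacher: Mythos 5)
Your proposal has the same skeleton as the paper's proof: Lagrange multiplier rule, the Nehari identity $A(u)-B(u)=\mu_c c^2$ from pairing the Euler--Lagrange equation with $u$, a Pohozaev-type identity, and the sign of $\mu_c$ read off from the range of $p$. Your algebra and sign discussion are correct and equivalent to the paper's formula $\mu_c=\frac{(N-2)p-(N+\alpha)}{2pc^2}B(u)<0$. The only divergence is in how the second identity is obtained, and it is exactly the point you flag yourself. The paper does not argue via stationarity along the dilation path at all: it invokes the Pohozaev identity $\frac{N-2}{2}A(u)-\frac{N+\alpha}{2p}B(u)=\frac{N}{2}\mu_c c^2$ for solutions of the Euler--Lagrange equation, citing Proposition 3.5 of \cite{ms2}, and then eliminates $\mu_c$ using Nehari.

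Your primary route is indeed not valid as stated: for a general $u\in H^1(\R^N)$ the curve $t\mapsto u^t$ is not differentiable in $H^1$ (its formal tangent $\frac{N}{2}u+x\cdot\nabla u$ need not even belong to $L^2$), so constrained criticality does not transfer to the path by a chain rule, and asserting $f'(1)=0$ there is essentially assuming the Pohozaev identity one wants to prove. Note also that your first proposed repair is weaker than you suggest: elliptic regularity gives local smoothness of $u$, but placing $x\cdot\nabla u$ in $L^2$ additionally requires decay of $\nabla u$ at infinity, which is not known a priori for an arbitrary, possibly sign-changing, constrained critical point. The robust repair is your stated fallback --- derive the Pohozaev identity directly from the PDE using the multiplier $x\cdot\nabla u$ with cutoff functions --- and that is precisely the content of the result the paper cites; with that fallback your proof coincides with the paper's.
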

\begin{proof}~~Since $(I_p|_{S(c)})'(u)=0$, there exists $\mu_c\in\R$ such that $I_p'(u)-\mu_cu=0$ in $H^{-1}(\R^N)$. Then
$$A(u)-B(u)=\mu_cc^2.$$
By Proposition 3.5 in \cite{ms2}, $u$ satisfies the following Pohozaev identity,
$$\frac{N-2}{2}A(u)-\frac{N+\alpha}{2p}B(u)=\frac{N}{2}\mu_cc^2.$$
Hence $A(u)=\frac{Np-(N+\alpha)}{2p}B(u)$ and
$$\mu_c=\frac{(N-2)p-(N+\alpha)}{2pc^2}B(u)<0.$$
\end{proof}

\noindent $\textbf{Proof of Theorem 1.1}$\\
\begin{proof}~~(1)~~If $p=\frac{N+\al}{N}$, for any $c>0$ and $u\in S(c)$, by \eqref{1.12} we have
$$I_{\frac{N+\alpha}{N}}(u)\geq-\frac{N}{2(N+\al)}\left(\frac{c}{|Q_{\frac{N+\alpha}{N}}|_2}\right)^{\frac{2(N+\al)}{N}}.$$
Set $Q_{\frac{N+\alpha}{N}}^t(x):=\frac{ct^{\frac N2}}{|Q_{\frac{N+\alpha}{N}}|_2}Q_{\frac{N+\alpha}{N}}(tx)$, then by \eqref{1.12} again, we see that
$$I_{\frac{N+\alpha}{N}}(Q_{\frac{N+\alpha}{N}}^t)=\frac{c^2t^2}{2|Q_{\frac{N+\alpha}{N}}|_2^2}A(Q_{\frac{N+\alpha}{N}})
-\frac{N}{2(N+\al)}\left(\frac{c}{|Q_{\frac{N+\alpha}{N}}|_2}\right)^{\frac{2(N+\al)}{N}},$$
letting $t\rightarrow 0^+$, then $I_{\frac{N+\alpha}{N}}(c^2)=-\frac{N}{2(N+\al)}(\frac{c}{|Q_{\frac{N+\alpha}{N}}|_2})^{\frac{2(N+\al)}{N}}.$

By contradiction, if for some $c>0$, there is $u\in S(c)$ such that $I_{\frac{N+\alpha}{N}}(u)=I_{\frac{N+\alpha}{N}}(c^2)$, then \eqref{1.12} shows that
$$0\leq\frac{1}{2}A(u)=\frac{N}{2(N+\al)}\left[B(u)-\left(\frac{c}{|Q_{\frac{N+\alpha}{N}}|_2}\right)^{\frac{2(N+\al)}{N}}\right]\leq0,$$
which implies that $u=0$. It is a contradiction. So $I_{\frac{N+\alpha}{N}}(c^2)$ has no minimizer for all $c>0$.

(2)~~If $\frac{N+\alpha}{N}<p<\frac{N+\al+2}{N}$, for any $c>0$, by Lemma \ref{lem2.3}, $I_p(c^2)<0$. Let $\{u_n\}\subset S(c)$ be a minimizing sequence for $I_p(c^2)$, then Lemma \ref{lem2.3} (1) implies that $\{u_n\}$ is bounded in $H^1(\R^N)$ and for some constant $C>0$ independent of $n$, $B(u_n)\geq C$. Hence there exists $u\in H^1(\R^N)$ such that
\begin{equation}\label{2.9}
u_n\rightharpoonup u\ \ \hbox{in}\ H^1(\R^N),\ \ \ \ \ \ \ u_n(x)\rightarrow u(x)\ \ \hbox{a.e.}\ \hbox{in}\ \R^N.
\end{equation}
Moreover, by the Vanishing Lemma \ref{lem2.6}, up to translations, we may assume that $u\neq0$. Then $0<|u|_2:=\alpha\leq c$. We just suppose that $\alpha<c$, then $u\in S(\alpha)$. By \eqref{2.9} and the Brezis lemma, we have
$$\lim\limits_{n\rightarrow+\infty}|u_n-u|_2^2=\lim\limits_{n\rightarrow+\infty}|u_n|_2^2-|u|_2^2=c^2-\alpha^2.$$
Then by Lemma \ref{lem2.1} and Lemma \ref{lem2.4} (1), we have
$$
I_p(c^2)=\lim\limits_{n\rightarrow+\infty}I_p(u_n)
=\lim\limits_{n\rightarrow+\infty}I_p(u_n-u)+I_p(u)
\geq I_p(c^2-\alpha^2)+I_p(\alpha^2),$$
which contradicts \eqref{2.8}. So $|u|_2=c$, i.e. $u_n\rightarrow u$ in $L^2(\R^N)$. By \eqref{2.4}, we have $B(u_n)\rightarrow B(u)$. Then
$$I_p(c^2)\leq I_p(u)\leq \lim\limits_{n\rightarrow+\infty}I_p(u_n)=I_p(c^2),$$
i.e. $u$ is minimizer for $I_p(c^2)$.

$(3)$~~(i) has been proved in Lemma \ref{lem2.4} (2). To prove (ii), by contradiction, if there exists $c_0\in (0,c_*)$ such that $I_{\frac{N+\alpha+2}{N}}(c_0^2)$ has a minimizer $u_0\in S(c_0)$, i.e. $I_{\frac{N+\alpha+2}{N}}(u_0)=I_{\frac{N+\alpha+2}{N}}(c_0^2)=0$, then by \eqref{2.4},
$$A(u_0)=\frac{N}{N+\alpha+2}B(u_0)\leq \left(\frac{c_0}{c_*}\right)^{\frac{2(\alpha+2)}{N}}A(u_0)<A(u_0),$$
which is impossible. So combining (i), we see that $I_{\frac{N+\alpha+2}{N}}(c^2)$ has no minimizer for all $c\neq c_*$.

By \eqref{2.5}, we see that $I_{\frac{N+\alpha+2}{N}}(Q_{\frac{N+\alpha+2}{N}})=0=I_{\frac{N+\alpha+2}{N}}(c_*^2),$ i.e. $Q_{\frac{N+\alpha+2}{N}}$ is a minimizer for $I_{\frac{N+\alpha+2}{N}}(c_*^2)$. Moreover, by Lemmas \ref{lem3.1} (2) and \ref{lem3.2} below, each groundstate solution of \eqref{1.6} is a minimizer of $I_{\frac{N+\alpha+2}{N}}(c_*^2)$. So we proved (iii).

For any $c>0$, suppose that $u$ is a critical point of $I_{\frac{N+\alpha+2}{N}}(u)$ constrained on $S(c)$, then by \eqref{2.5} and Lemma \ref{lem2.5}, we have
$$A(u)=\frac{N}{N+\alpha+2}B(u)\leq \left(\frac{c}{c_*}\right)^{\frac{2(\alpha+2)}{N}}A(u),$$
which implies that $c_*\leq c$. Therefore, there exists no critical point for $I_{\frac{N+\alpha+2}{N}}(u)$ constrained on $S(c)$ if $0<c<c_*$. So (iv) is proved.

(4)\ \ By Lemma \ref{lem2.3} (3), $I_{p}(c^2)$ has no minimizer for all $c>0$ if $\frac{N+\al+2}{N}<p<\frac{N+\al}{(N-2)_+}$.

\end{proof}

%\begin{center}
\section{ Proof of Theorems \ref{th1.2} and \ref{th1.3}}
%\end{center}

 For $p=\frac{N+\alpha+2}{N}$, \eqref{2.4} turns to be
\begin{equation}\label{3.4}
B(u)\leq \frac{N+\alpha+2}{N}\left(\frac{1}{c_*}\right)^{\frac{2(\alpha+2)}{N}}A(u)|u|_2^{\frac{2(\alpha+2)}{N}},
\end{equation}
with equality for $u=Q_{\frac{N+\alpha+2}{N}}$ and $c_*:=|Q_{\frac{N+\alpha+2}{N}}|_2$, where $Q_{\frac{N+\alpha+2}{N}}$ is a nontrivial solution of
$$-\Delta Q_{\frac{N+\alpha+2}{N}}+\frac{\alpha+2}{N}Q_{\frac{N+\alpha+2}{N}}=(I_\alpha*|Q_{\frac{N+\alpha+2}{N}}|^{\frac{N+\alpha+2}{N}})|Q_{\frac{N+\alpha+2}{N}}|^{\frac{N+\alpha+2}{N}-2}Q_{\frac{N+\alpha+2}{N}},\ \ \hbox{in}\ \R^N.$$
Set $Q_{\frac{N+\alpha+2}{N}}(x)=\left(\sqrt{\frac{\alpha+2}{N}}\right)^{\frac{N}{2}}\widetilde{Q}_{\frac{N+\alpha+2}{N}}(\sqrt{\frac{\alpha+2}{N}}x)$, then $\widetilde{Q}_{\frac{N+\alpha+2}{N}}$ satisfies the equation
\begin{equation}\label{3.1}
-\Delta \widetilde{Q}_{\frac{N+\alpha+2}{N}}+\widetilde{Q}_{\frac{N+\alpha+2}{N}}
=(I_\alpha*|\widetilde{Q}_{\frac{N+\alpha+2}{N}}|^{\frac{N+\alpha+2}{N}})|\widetilde{Q}_{\frac{N+\alpha+2}{N}}|^{\frac{N+\alpha+2}{N}-2}\widetilde{Q}_{\frac{N+\alpha+2}{N}}, \ \ \hbox{in}\ \R^N.
\end{equation}

The following Lemma is a direct conclusion of Theorems 1-4 in \cite{ms1}.
\begin{lemma}\label{lem3.1}~~Assume that $N\geq1$ and $\alpha\in(0,N)$.

$(1)$~~There is at least one groundstate solution $u\in H^1(\R^N)$ to \eqref{3.1} with
$$F(u)=d:=\inf\{F(v)|~v\in H^1(\R^N)\backslash\{0\}~\hbox{is~a~weak~solution~of}~\eqref{3.1}\},$$
where $F(v)=\ds\frac{1}{2}\int_{\R^N}(|\nabla v|^2+|v|^2)-\frac{N}{2(N+\alpha+2)}\ds\int_{\R^N}(I_\alpha*|v|^{\frac{N+\alpha+2}{N}})|v|^{\frac{N+\alpha+2}{N}}.$

$(2)$~~If $u\in H^1(\R^N)$ is a nontrivial solution of \eqref{3.1}, then $u\in L^1(\R^N)\cap C^2(\R^N),$ $u\in W^{2,s}(\R^N)$ for every $s>1$ and $u\in C^\infty(\R^N\backslash u^{-1}(\{0\})$. Moreover,
\begin{equation}\label{3.2}
\frac{N+\alpha+2}{N}A(u)=\frac{N+\alpha+2}{\alpha+2}
\ds\int_{\R^N}|u|^2
=B(u).
\end{equation}

$(3)$~~If $u$ is a groundstate solution of \eqref{3.1}, then $u$ is either positive or negative and there exists $x_0\in\R^N$ and a monotone function $v\in C^{\infty}(0,+\infty)$ such that $$u(x)=v(|x-x_0|),~~~~~~\forall~x\in\R^N.$$

$(4)$~~Let $N-2\leq \alpha<N$ if $N\geq3$ and $0<\al<N$ if $N=1,2$. If $u$ is a groundstate solution of \eqref{3.1}, then
$$\lim\limits_{|x|\rightarrow+\infty}|u(x)||x|^{\frac{N-1}{2}}e^{|x|}\in (0,+\infty).$$
Moreover, $|\nabla u(x)|=O(|x|^{-\frac{N-1}{2}}e^{-|x|})$ as $|x|\rightarrow+\infty.$
\end{lemma}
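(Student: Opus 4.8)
The plan is to read \eqref{3.1} as the autonomous Choquard equation
$$-\Delta u + u = (I_\al*|u|^p)|u|^{p-2}u,\qquad p=\frac{N+\al+2}{N},$$
and to transfer to it the existence, regularity, symmetry and decay theory of \cite{ms1}. The first step is to confirm that this exponent is admissible for that theory, which requires $\frac{N+\al}{N}<p<\frac{N+\al}{(N-2)_+}$. The lower bound is immediate since $\frac{N+\al+2}{N}>\frac{N+\al}{N}$; for $N\geq3$ the upper bound follows from $(N+\al+2)(N-2)=N(N+\al)-2(\al+2)<N(N+\al)$, while for $N=1,2$ the upper bound is $+\infty$ and there is nothing to check. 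With admissibility in hand, Theorems 1--4 of \cite{ms1} apply verbatim and deliver: the existence of a groundstate realizing the least-energy value $d$ over nontrivial weak solutions, which is $(1)$; the regularity assertions $u\in L^1(\R^N)\cap C^2(\R^N)$, $u\in W^{2,s}(\R^N)$ for every $s>1$ and $u\in C^\infty(\R^N\backslash u^{-1}(\{0\}))$ in $(2)$; and the sign, radial symmetry about a point and monotonicity in $(3)$.

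The only piece not read off directly is the identity \eqref{3.2}, which I would obtain from two integral identities. Testing \eqref{3.1} against $u$ gives the Nehari identity $A(u)+\int_{\R^N}|u|^2=B(u)$, and the Pohozaev identity for \eqref{3.1} (derived exactly as in the proof of Lemma \ref{lem2.5}, via Proposition 3.5 of \cite{ms2}) reads
$$\frac{N-2}{2}A(u)+\frac{N}{2}\ds\int_{\R^N}|u|^2=\frac{N(N+\al)}{2(N+\al+2)}B(u).$$
Substituting $B(u)=A(u)+\int_{\R^N}|u|^2$ into the Pohozaev identity and simplifying collapses everything to the single linear relation $\int_{\R^N}|u|^2=\frac{\al+2}{N}A(u)$; inserting this back into the Nehari identity gives $B(u)=\frac{N+\al+2}{N}A(u)$, and the three displayed quantities in \eqref{3.2} are then seen to coincide.

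I expect the genuinely delicate ingredient to be $(4)$, the sharp decay asymptotics, which is why it carries the extra hypothesis $N-2\leq\al<N$ (for $N\geq3$; $0<\al<N$ for $N=1,2$). This restriction is equivalent to $p=\frac{N+\al+2}{N}\geq2$: only then does the factor $|u|^{p-2}$ stay bounded as $u\to0$, so that the nonlocal term $(I_\al*|u|^{p})|u|^{p-2}u$, whose potential $I_\al*|u|^{p}$ already decays like $|x|^{-(N-\al)}$, is subdominant at infinity and the asymptotics are governed by the linear Helmholtz operator $-\Delta+1$. The resulting Yukawa rate $|u(x)||x|^{\frac{N-1}{2}}e^{|x|}\to\ell\in(0,+\infty)$ and the gradient bound $|\nabla u(x)|=O(|x|^{-\frac{N-1}{2}}e^{-|x|})$ are then exactly the content of the decay theorem of \cite{ms1}, which applies once the admissibility and range checks above are in place.
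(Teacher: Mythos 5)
Your proposal is correct and follows essentially the same route as the paper, which offers no proof at all beyond the remark that the lemma ``is a direct conclusion of Theorems 1--4 in \cite{ms1}.'' You simply make explicit the routine checks the paper leaves implicit — the admissibility of the exponent $p=\frac{N+\alpha+2}{N}$, the Nehari--Pohozaev algebra yielding \eqref{3.2}, and the equivalence $\alpha\geq N-2\Leftrightarrow p\geq 2$ behind the hypothesis in part $(4)$ — all of which are accurate.
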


\begin{lemma}\label{lem3.2}~~(1)~~$d=\frac{c_*^2}{2}$.

(2)~~$u$ is a nontrivial solution of \eqref{3.1} with $|u|_2=c_*$ if and only if $u$ is a groundstate solution.
\end{lemma}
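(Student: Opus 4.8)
The plan is to exploit the Pohozaev-type identities (3.2) to reduce the energy $F$ evaluated at any nontrivial solution of \eqref{3.1} to a fixed multiple of its $L^2$-norm, and then to pin down the minimal admissible $L^2$-norm via the sharp interpolation inequality \eqref{3.4}.

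First I would show that for every nontrivial solution $u$ of \eqref{3.1} one has $F(u)=\frac{1}{2}|u|_2^2$. Indeed, \eqref{3.2} gives $A(u)=\frac{N}{\alpha+2}|u|_2^2$ and $B(u)=\frac{N+\alpha+2}{\alpha+2}|u|_2^2$. Substituting these into $F(u)=\frac{1}{2}\bigl(A(u)+|u|_2^2\bigr)-\frac{N}{2(N+\alpha+2)}B(u)$ and simplifying, the $A(u)$ and $B(u)$ contributions collapse and one is left with $F(u)=\frac{1}{2}|u|_2^2$. This is the crucial algebraic identity driving both parts of the lemma.

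Next I would establish the lower bound $|u|_2\geq c_*$ for every nontrivial solution $u$. Combining the solution relation $B(u)=\frac{N+\alpha+2}{N}A(u)$ from \eqref{3.2} with the sharp inequality \eqref{3.4} and dividing by $A(u)>0$ yields $1\leq\left(|u|_2/c_*\right)^{\frac{2(\alpha+2)}{N}}$, hence $|u|_2\geq c_*$. Since $\widetilde{Q}_{\frac{N+\alpha+2}{N}}$ is a nontrivial solution of \eqref{3.1} with $|\widetilde{Q}_{\frac{N+\alpha+2}{N}}|_2=c_*$ (the scaling relating it to $Q_{\frac{N+\alpha+2}{N}}$ preserves the $L^2$-norm), this bound is attained. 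Together with the identity $F(u)=\frac{1}{2}|u|_2^2$ this gives $F(u)\geq\frac{c_*^2}{2}$ for all nontrivial solutions while $F(\widetilde{Q}_{\frac{N+\alpha+2}{N}})=\frac{c_*^2}{2}$, so $d=\frac{c_*^2}{2}$, proving $(1)$.

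Part $(2)$ then follows immediately from the same identity: if $u$ is a nontrivial solution with $|u|_2=c_*$, then $F(u)=\frac{c_*^2}{2}=d$, so $u$ is a groundstate; conversely, if $u$ is a groundstate then $F(u)=d=\frac{c_*^2}{2}$ forces $|u|_2^2=c_*^2$. I expect the only delicate point to be the careful bookkeeping in the energy identity $F(u)=\frac{1}{2}|u|_2^2$ and verifying that $\widetilde{Q}_{\frac{N+\alpha+2}{N}}$ genuinely realizes the value $c_*$ through the norm-preserving scaling; everything else is a direct consequence of \eqref{3.2} and \eqref{3.4}.
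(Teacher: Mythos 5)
Your proof is correct and follows essentially the same route as the paper's: both rest on the identity $F(u)=\frac12|u|_2^2$ derived from \eqref{3.2}, the lower bound $|u|_2\geq c_*$ obtained by combining \eqref{3.2} with the sharp inequality \eqref{3.4}, and the fact that $\widetilde{Q}_{\frac{N+\alpha+2}{N}}$ is a nontrivial solution of \eqref{3.1} with $|\widetilde{Q}_{\frac{N+\alpha+2}{N}}|_2=c_*$. The only cosmetic difference is that you obtain $d\geq\frac{c_*^2}{2}$ by taking the infimum of $F$ over all nontrivial solutions directly, whereas the paper evaluates $F$ at a groundstate whose existence and nontriviality it quotes from Lemma \ref{lem3.1}; nothing of substance changes.
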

\begin{proof}~~For any nontrivial solution $u$ of \eqref{3.1}, then by Lemma \ref{lem3.1} (1)(2) and \eqref{3.4}, we have
$$c_*\leq |u|_2$$
and
$$
d\leq F(u)=\frac{1}{2}\int_{\R^N}|u|^2
$$
where equality holds only if $u$ is a groundstate solution. In particular, since $\widetilde{Q}_{\frac{N+\alpha+2}{N}}$ is a nontrivial solution of \eqref{3.1},
$$d\leq F(\widetilde{Q}_{\frac{N+\alpha+2}{N}})= \frac{|\widetilde{Q}_{\frac{N+\alpha+2}{N}}|_2^2}{2}=\frac{c_*^2}{2}.$$

Therefore, if $u$ is a groundstate solution of \eqref{3.1}, then by Lemma \ref{lem3.1} (3), $u$ is nontrivial and
$$\frac{c_*^2}{2}\leq \frac{|u|_2^2}{2}=F(u)=d\leq \frac{c_*^2}{2},$$
which shows that $d=\frac{c_*^2}{2}$ and $|u|_2=c_*$.

On the other hand, if $u$ is a nontrivial solution of \eqref{3.1} with $|u|_2=c_*$, then
$$\frac{ c_*^2}{2}=d\leq F(u)=\frac{1}{2}\int_{\R^N}|u|^2=\frac{ c_*^2}{2},$$
which implies that $F(u)=d$, i.e. $u$ is a groundstate solution.
\end{proof}

\begin{remark}\label{rem3.3}~~$\widetilde{Q}_{\frac{N+\alpha+2}{N}}$ is a groundstate solution of \eqref{3.1}.
\end{remark}

\begin{lemma}\label{lem3.4}(\cite{a})~~Suppose that $V\in L^{\infty}_{loc}(\R^N)$ and $\lim\limits_{|x|\rightarrow +\infty}V(x)=+\infty$, then the embedding $\mathcal{H}\hookrightarrow L^s(\R^N)$, $2\leq s<2^*$ is compact.
\end{lemma}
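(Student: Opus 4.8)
The plan is to establish the compactness via the standard argument that exploits the coercivity of $V$ to prevent the escape of mass to infinity. First I would take an arbitrary bounded sequence $\{u_n\}$ in $\mathcal{H}$, say $\|u_n\|_{\mathcal{H}}\leq M$ for all $n$, and pass to a subsequence with $u_n\rightharpoonup u$ in $\mathcal{H}$; since the embedding $\mathcal{H}\hookrightarrow H^1(\R^N)$ is continuous, this yields $u_n\rightharpoonup u$ in $H^1(\R^N)$ and $u_n\to u$ a.e. in $\R^N$, and by weak lower semicontinuity $u\in\mathcal{H}$ with $\|u\|_{\mathcal{H}}\leq M$. The aim is to upgrade this to strong convergence $u_n\to u$ in $L^s(\R^N)$ for $2\leq s<2^*$.

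The argument splits into a local part and a tail part. On any fixed ball $B_R(0)$ the Rellich--Kondrachov theorem gives that $H^1(B_R)\hookrightarrow L^s(B_R)$ is compact, so $u_n\to u$ strongly in $L^s(B_R)$ for each fixed $R>0$. It therefore suffices to control $\int_{|x|>R}|u_n|^s$ uniformly in $n$, and this is where the hypothesis $\lim\limits_{|x|\to+\infty}V(x)=+\infty$ is essential. Given $\va>0$, coercivity provides $R_\va>0$ with $V(x)\geq \va^{-1}$ whenever $|x|>R_\va$, so that
$$\ds\int_{|x|>R_\va}|u_n|^2\leq \va\ds\int_{|x|>R_\va}V(x)|u_n|^2\leq \va\|u_n\|_{\mathcal{H}}^2\leq \va M^2$$
uniformly in $n$, and the same bound holds for $u$. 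For $s=2$ this already suffices. For $2<s<2^*$ I would interpolate: choosing $\theta\in(0,1)$ with $\frac1s=\frac{1-\theta}{2}+\frac{\theta}{2^*}$, and using any fixed exponent larger than $s$ in place of $2^*$ when $N=1,2$, Hölder's inequality together with the Sobolev embedding gives
$$|u_n|_{L^s(|x|>R_\va)}\leq |u_n|_{L^2(|x|>R_\va)}^{1-\theta}\,|u_n|_{L^{2^*}(|x|>R_\va)}^{\theta}\leq (\va M^2)^{\frac{1-\theta}{2}}(CM)^{\theta},$$
so that the $L^s$ tails are uniformly small as $\va\to0$.

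Combining the two parts finishes the proof: given $\de>0$, I would first fix $\va$ (hence $R_\va$) so small that the $L^s(|x|>R_\va)$ norms of both $u_n$ and $u$ lie below $\de$, and then invoke the local strong convergence on $B_{R_\va}$ to make $|u_n-u|_{L^s(B_{R_\va})}<\de$ for $n$ large; the triangle inequality then yields $u_n\to u$ in $L^s(\R^N)$. The one genuinely load-bearing step is the tail estimate, where the weighted mass $\int_{\R^N}V(x)|u_n|^2$, controlled by $\|u_n\|_{\mathcal{H}}^2$, is traded against the $L^2$-mass at infinity; the interpolation and the local compactness are then routine.
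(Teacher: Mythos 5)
Your proof is correct. The paper itself gives no proof of this lemma --- it simply cites the reference \cite{a} (Adams--Fournier) --- so there is no in-paper argument to compare against; your argument is the standard one for such weighted embeddings, and every step holds: local compactness on balls via Rellich--Kondrachov, the tail estimate $\int_{|x|>R_\va}|u_n|^2\leq \va\|u_n\|_{\mathcal{H}}^2$ from the coercivity of $V$ (using $V\geq 0$, which is implicit in the definition of $\|\cdot\|_{\mathcal{H}}$ and guaranteed by $(V_0)$), interpolation against the Sobolev bound in $L^{2^*}$ (with the correct modification for $N=1,2$), and the triangle-inequality assembly. This is precisely the proof one would extract from the cited source, so your write-up legitimately fills the gap the paper leaves to a reference.
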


\noindent $\textbf{Proof of Theorem \ref{th1.2}}$\\
\begin{proof}~~Set $$C(u):=\ds\int_{\R^N}V(x)|u|^2\geq0,\ \ \ \forall~u\in H^1(\R^N),$$
then $$E(u)=\frac{A(u)}{2}+\frac{C(u)}{2}-\frac{N}{2(N+\alpha+2)}B(u).$$

(1)~~By \eqref{3.4}, for any $0<c\leq c_*$ and $u\in \widetilde{S}(c)$,
\begin{equation}\label{3.5}
E(u)\geq \frac{1}{2}\left[1-\left(\frac{c}{c_*}\right)^{\frac{2(\alpha+2)}{N}}\right]A(u)+\frac{1}{2}C(u)\geq0,
\end{equation}
then $e_c=\inf\limits_{u\in\widetilde{S}(c)}E(u)\geq0$ is well defined for $0<c\leq c_*$.

For each $0<c<c_*$, let $\{u_n\}\subset \widetilde{S}(c)$ be a minimizing sequence for $e_c$, then by \eqref{3.5}, $\{u_n\}$ is bounded in $\mathcal{H}$. Hence there exists $u_c\in \mathcal{H}$ such that $u_n\rightharpoonup u_c$ in $\mathcal{H}$. By Lemma \ref{lem3.4}, $u_n\rightarrow u_c$ in $L^s(\R^N)$, $2\leq s<2^*$, which implies that $|u_c|_2=c$ and $B(u_n)\rightarrow B(u_c)$. So $e_c\leq E(u_c)\leq \lim\limits_{n\rightarrow+\infty}E(u_n)=e_c$, i.e. $u_c\in \widetilde{S}(c)$ is a minimizer of $e_c$. Moreover, by \eqref{3.5}, $e_c>0$. So $e_c>0$ has at least one minimizer for all $0<c<c_*$.

(2)~~Let $N-2\leq \alpha<N$ if $N\geq3$ and $0<\al<N$ if $N=1,2$. For any $c>0$, let $\varphi\in C_0^{\infty}(\R^N)$ such that $0\leq \varphi(x)\leq1$, $\varphi(x)\equiv1$ for $|x|\leq1$, $\varphi(x)\equiv0$ for $|x|\geq2$ and $|\nabla \varphi|\leq 2$. For any $x_0\in\R^N$ and any $t>0$, set
\begin{equation}\label{3.6}
\widetilde{Q}^t(x)=\frac{cA_t t^{\frac{N}{2}}}{c_*}\varphi(x-x_0)\widetilde{Q}_{\frac{N+\alpha+2}{N}}(t(x-x_0)),
\end{equation}
where $A_t>0$ is chosen to satisfy that $|\widetilde{Q}^t|_2=c$. By the exponential decay of $\widetilde{Q}_{\frac{N+\alpha+2}{N}}$, we see that
$$\frac{1}{A_t^2}=1+\frac{1}{c_*^2}\ds\int_{\R^N}\left(\varphi^2(\frac{x}{t})-1\right)|\widetilde{Q}_{\frac{N+\alpha+2}{N}}(x)|^2
\rightarrow1$$
as $t\rightarrow +\infty$. Then $A_t$ depends only on $t$ and $\lim\limits_{t\rightarrow+\infty}A_t=1$. Since $V(x)\varphi^2(x-x_0)$ is bounded and has compact support, $C(\widetilde{Q}^t)\rightarrow \frac{c^2}{c_*^2}V(x_0)$.
$$\begin{array}{ll}
B(\widetilde{Q}^t)&=\ds(\frac{c A_t}{c_*})^{\frac{2(N+\alpha+2)}{N}}t^2\ds\left\{B(\widetilde{Q}_{\frac{N+\alpha+2}{N}})\right.\\[5mm]
& \left.+\ds\int_{\R^N}\{I_\alpha*[(|\varphi(\frac{y}{t})|^{\frac{N+\alpha+2}{N}}-1)|\widetilde{Q}_{\frac{N+\alpha+2}{N}}(y)|^{\frac{N+\alpha+2}{N}}]\}
(|\varphi(\frac{x}{t})|^{\frac{N+\alpha+2}{N}}+1)|\widetilde{Q}_{\frac{N+\alpha+2}{N}}(x)|^{\frac{N+\alpha+2}{N}}\right\}\\[5mm]
&:=\ds(\frac{c A_t}{c_*})^{\frac{2(N+\alpha+2)}{N}}t^2\left[B(\widetilde{Q}_{\frac{N+\alpha+2}{N}})+f_1(t)\right].
\end{array}$$
By the Hardy-Littlewood-Sobolev inequality \eqref{1.20} and the exponential decay of $\widetilde{Q}_{\frac{N+\alpha+2}{N}}$, we have there exists a constant $C>0$ such that
$$\begin{array}{ll}
|f_1(t)|&\leq C\left(\ds\int_{\R^N}|[\varphi(\frac{x}{t})]^{\frac{N+\alpha+2}{N}}-1|^{\frac{2N}{N+\alpha}}|\widetilde{Q}_{\frac{N+\alpha+2}{N}}(x)|^{\frac{2(N+\alpha+2)}{N+\alpha}}\right)^{\frac{N+\alpha}{2N}}\\[5mm]
&\leq C\left(\ds\int_{|x|\geq t}|\widetilde{Q}_{\frac{N+\alpha+2}{N}}(x)|^{\frac{2(N+\alpha+2)}{N+\alpha}}\right)^{\frac{N+\alpha}{2N}}\\[5mm]
&\leq C\left(\ds\int_{t}^{+\infty}r^{-\frac{2(N-1)}{N+\alpha}}e^{-\frac{2(N+\alpha+2)}{N+\alpha}r}\right)^{\frac{N+\alpha}{2N}}\leq Ct^{-\frac{2(N-1)}{2N}}e^{-\frac{N+\alpha+2}{N}t}\ \ \ \ \ \ \ \hbox{as}\ \ t\rightarrow+\infty.
\end{array}$$
Then by the exponential decay of $\widetilde{Q}_{\frac{N+\alpha+2}{N}}$ and $|\nabla \widetilde{Q}_{\frac{N+\alpha+2}{N}}|$, we have
\begin{equation}\label{3.7}
E(\widetilde{Q}^t)=\frac{c^2}{2c_*^2}t^2A(\widetilde{Q}_{\frac{N+\alpha+2}{N}})\left[1-\left(\frac{c}{c_*}\right)^{\frac{2(\alpha+2)}{N}}\right]+t^2f_2(t)+\frac{c^2}{2c_*^2}V(x_0)\ \ \hbox{as}\ \ t\rightarrow+\infty,
\end{equation}
where $f_2(t)$ denotes a function satisfying that $\lim\limits_{t\rightarrow+\infty}|f_2(t)|t^r=0$ for all $r>0$.

If $c>c^*$, then by \eqref{3.7}, $e_c\leq \lim\limits_{t\rightarrow+\infty}E(\widetilde{Q}^t)=-\infty$, hence $e_c=-\infty$ and there exists no minimizer for $e_c$.

If $c=c^*$, then by \eqref{3.5} and \eqref{3.7}, $0\leq e_{c_*}\leq \frac{V(x_0)}{2}.$
Taking the infimum over $x_0$, $e_{c_*}=0$. We just suppose that there exists $u\in \widetilde{S}(c_*)$ such that $E(u)=e_{c_*}$, then it follows from \eqref{3.5} that
\begin{equation}\label{3.9}
C(u)=0,
\end{equation}
which and the condition $(V_0)$ imply that $u$ must have compact support. On the other hand, $(E|_{\widetilde{S}(c_*)})'(u)=0$. Then there exists $\mu_{c_*}\in\R$ such that $E'(u)-\mu_{c_*}u=0$, i.e. for any $h\in C_0^{\infty}(\R^N)$,
\begin{equation}\label{3.10}\begin{array}{ll}
 0&=\langle E'(u)-\mu_{c_*}u,h\rangle\\[5mm]
 &=\ds\int_{\R^N}(\nabla u\nabla h-\mu_{c_*}uh)-\ds\int_{\R^N}(I_\alpha*|u|^{\frac{N+\alpha+2}{N}})|u|^{\frac{N+\alpha+2}{N}-2}uh\\[5mm]
 &=\langle I_{\frac{N+\alpha+2}{N}}'(u)-\mu_{c_*}u,h\rangle,
 \end{array}\end{equation}
where we have used the fact that $\int_{\R^N}V(x)uh=0$ due to the H\"{o}lder inequality and \eqref{3.9}. Then by Lemma \ref{lem2.5}, we see that $\mu_{c_*}<0$. Set $u(x):=(\sqrt{-\mu_{c_*}})^{\frac{N}{2}}w(\sqrt{-\mu_{c_*}}x)$, then by \eqref{3.10}, $w$ is a nontrivial solution of \eqref{3.1} with $|w|_2=c_*$, hence by Lemma \ref{lem3.2} $w$ is a groundstate solution. So by Lemma \ref{lem3.1} (4), $\lim\limits_{|x|\rightarrow +\infty}|u(x)||x|^{\frac{N-1}{2}}e^{|x|}\in(0,+\infty),$
which contradicts \eqref{3.9}.

Moreover, we conclude from \eqref{3.6} and \eqref{3.7} that $\limsup\limits_{c\rightarrow (c_*)^-}e_c\leq \frac{V(x_0)}{2}$ as $t\rightarrow+\infty$. By the arbitrary of $x_0$, we have $\lim\limits_{c\rightarrow (c_*)^-}e_c=0=e_{c_*}.$
\end{proof}

In the following, we consider the concentration behavior of minimizers as $c$ approaches $c_*$ from below when $N-2\leq \alpha<N$ if $N\geq3$ and $0<\al<N$ if $N=1,2$ and the potential $V(x)$ satisfies conditions $(V_0)(V_1)$.

\begin{lemma}\label{lem3.6}~~
Suppose that $(V_0)(V_1)$ hold, then there exist two positive constants $M_1<M_2$ independent of $c$ such that
$$M_1\left[1-\left(\frac{c}{c_*}\right)^{\frac{2(\alpha+2)}{N}}\right]^{\frac{q}{q+2}}\leq e_c\leq M_2\left[1-\left(\frac{c}{c_*}\right)^{\frac{2(\alpha+2)}{N}}\right]^{\frac{q}{q+2}}\ \ \ \ \hbox{as}\ \ c\rightarrow (c_*)^-,$$
where $q=\max\{q_1,q_2,\cdots,q_m\}$.
\end{lemma}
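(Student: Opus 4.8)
The plan is to establish the two inequalities separately: the upper bound by an explicit test function centered at a flattest well of $V$, and the lower bound by a concentration analysis of the minimizers combined with an elementary one–variable minimization. Throughout write $\theta_c:=1-(\frac{c}{c_*})^{\frac{2(\alpha+2)}{N}}$, so that $\theta_c\to0^+$ as $c\to(c_*)^-$ and the goal is $M_1\theta_c^{\frac{q}{q+2}}\le e_c\le M_2\theta_c^{\frac{q}{q+2}}$.

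For the upper bound, I would pick an index $i_0$ with $q_{i_0}=q$ and use the truncated, $L^2$–normalized test function $\widetilde Q^t$ from \eqref{3.6} centered at $x_0=x_{i_0}$. The computation leading to \eqref{3.7} already isolates the gradient–nonlinearity contribution as $\frac{c^2}{2c_*^2}A(\widetilde Q_{\frac{N+\alpha+2}{N}})\theta_c\,t^2$ up to exponentially small corrections. The new ingredient is the potential term: since $V(x_{i_0})=0$, the change of variables $y=t(x-x_{i_0})$ together with $(V_1)$ and the exponential decay of $\widetilde Q_{\frac{N+\alpha+2}{N}}$ gives $C(\widetilde Q^t)=\frac{c^2}{c_*^2}\mu_{i_0}\,t^{-q}\int_{\R^N}|y|^{q}|\widetilde Q_{\frac{N+\alpha+2}{N}}(y)|^2\,(1+o(1))$ as $t\to+\infty$. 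Hence $E(\widetilde Q^t)\approx a\,\theta_c t^2+b\,t^{-q}$ with $a,b>0$ independent of $c$; minimizing over $t$ forces $t\sim\theta_c^{-\frac{1}{q+2}}\to+\infty$ (so the asymptotics are legitimate) and yields $e_c\le E(\widetilde Q^{t})\le M_2\theta_c^{\frac{q}{q+2}}$. Choosing the flattest well ($q_{i}=q$) is precisely what produces the smallest exponent $\theta_c^{\frac{q}{q+2}}$.

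For the lower bound, let $u_c$ be a minimizer of $e_c$ (Theorem \ref{th1.2}(1)). From \eqref{3.5} we have $e_c\ge\frac12\theta_c A(u_c)+\frac12 C(u_c)$, and since $e_c\to0$ this forces $C(u_c)\le 2e_c\to0$; moreover $A(u_c)\to+\infty$ (if $A(u_c)$ stayed bounded then $u_c$ would be bounded in $\mathcal H$, hence strongly convergent in $L^2$ by Lemma \ref{lem3.4} to some $u_*$ with $|u_*|_2=c_*$, and $\int_{\R^N}V|u_*|^2\le\liminf C(u_c)=0$ by Fatou would give $u_*=0$, a contradiction). The crux is the estimate $C(u_c)\ge C_2\varepsilon_c^{\,q}$, which I would prove through the $L^2$–preserving rescaling \eqref{1.11} with $\varepsilon_c^2=\frac{2(N+\alpha+2)}{NB(u_c)}$. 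Then $A(w_c)=\varepsilon_c^2A(u_c)\to2$ and $B(w_c)=\frac{2(N+\alpha+2)}{N}$ is fixed, so $\{w_c\}$ is bounded in $H^1(\R^N)$; because $B(w_c)\not\to0$, the Hardy--Littlewood--Sobolev inequality \eqref{1.20} and the Vanishing Lemma \ref{lem2.6} allow us to select the translations $y_c$ in \eqref{1.11} and some $\delta>0$ with $\int_{B_1(0)}|w_c|^2\ge\delta$, whence $w_c\rightharpoonup W_0\ne0$ along a subsequence. Since $C(u_c)=\ds\int_{\R^N}V(\varepsilon_cx+\varepsilon_cy_c)|w_c(x)|^2\,dx\to0$ while the local mass is bounded below, the points $\varepsilon_cy_c$ can neither escape to infinity (where $V\to+\infty$) nor converge to a point with $V>0$; thus $\varepsilon_cy_c\to x_i$ for some well $x_i$. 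Writing $z_c=\varepsilon_cy_c-x_i\to0$ and using $V(x)\ge\frac{\mu_i}{2}|x-x_i|^{q_i}$ near $x_i$ from $(V_1)$, I obtain $C(u_c)\ge\frac{\mu_i}{2}\varepsilon_c^{q_i}\int_{B_R}|x+\frac{z_c}{\varepsilon_c}|^{q_i}|w_c|^2$, which is $\gtrsim\varepsilon_c^{q_i}\ge\varepsilon_c^{q}$ (the local mass lower bound controls the integral from below, and $q_i\le q$, $\varepsilon_c<1$).

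Finally I would combine the two facts. Using $B(u_c)\le\frac{N+\alpha+2}{N}A(u_c)$ (this is \eqref{3.4} with $c\le c_*$) gives $\varepsilon_c^2\ge\frac{2}{A(u_c)}$, hence $\varepsilon_c^q\ge(2/A(u_c))^{q/2}$ and
$$e_c\ \ge\ \frac12\theta_c A(u_c)+\frac{C_2}{2}\Big(\frac{2}{A(u_c)}\Big)^{q/2}\ \ge\ \min_{A>0}\Big[\frac{\theta_c}{2}A+\frac{C_2 2^{q/2}}{2}A^{-q/2}\Big]=M_1\theta_c^{\frac{q}{q+2}},$$
the last equality being the elementary minimization, with $M_1>0$ independent of $c$. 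I expect the main obstacle to be the potential lower bound $C(u_c)\ge C_2\varepsilon_c^q$: one must rule out vanishing of the rescaled family $\{w_c\}$ and, above all, show that the concentration point $\lim\varepsilon_cy_c$ is one of the wells $x_i$ (so that $(V_1)$ applies and no zero of $V$ flatter than order $q$ can interfere), as well as control the ratio $z_c/\varepsilon_c$, since only there does the clean power $\varepsilon_c^{q}$ survive.
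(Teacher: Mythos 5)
Your proposal follows the same skeleton as the paper's proof: the upper bound via the truncated groundstate centered at a flattest well with scale $t\sim[1-(c/c_*)^{2(\alpha+2)/N}]^{-1/(q+2)}$, and the lower bound via the rescaling $\varepsilon_c^2=\frac{2(N+\alpha+2)}{NB(u_c)}$, the Vanishing Lemma \ref{lem2.6}, concentration of $\varepsilon_c y_c$ at a well, a Fatou-type estimate $C(u_c)\gtrsim\varepsilon_c^{q}$, and a final one-variable minimization. The upper bound and the final algebraic combination are fine. However, the two claims you yourself defer to the end as ``the main obstacle'' are genuine gaps, and they are exactly where the paper's proof does its work. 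First, the bound $\int_{B_1(0)}|x+z_c/\varepsilon_c|^{q_i}|w_c|^2\gtrsim 1$ does \emph{not} follow from the local mass bound $\int_{B_1(0)}|w_c|^2\ge\delta/2$ alone: the weight vanishes at $x=-z_c/\varepsilon_c$, and nothing in the mass bound prevents $|w_c|^2$ from piling up near that point. The paper obtains the uniform constant $C_2>0$ in \eqref{3.16} only after securing that $z_c/\varepsilon_c$ is bounded, passing to a limit $z_c/\varepsilon_c\to y_0$, and applying Fatou against the \emph{nonzero weak limit} $w_0$; some such argument (or a non-concentration argument via the uniform $H^1$ bound) is mandatory here.

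Second, and more seriously, you assert $\varepsilon_c y_c\to x_i$ because the limit ``cannot be a point with $V>0$.'' Since $V$ is only $L^\infty_{loc}$ and $(V_0)$--$(V_1)$ nowhere state that $x_1,\dots,x_m$ exhaust the zeros of $V$, this step is not justified as written; and even granting it, you never prove the quantitative fact that step (i) needs, namely boundedness of the ratio $z_c/\varepsilon_c$. The paper's device is different and is the ingredient your proposal lacks: it proves precisely this boundedness, \eqref{3.20}, by contradiction \emph{against the already-established upper bound}. If $|(\varepsilon_c y_c-x_i)/\varepsilon_c|\to\infty$ for every well $x_i$, then Fatou and $(V_1)$ make $\varepsilon_c^{-q_i}C(u_c)$ exceed any prescribed constant $C$, and the chain \eqref{3.21} then forces $e_c\ge \mathrm{const}\cdot C^{2/(q_i+2)}\,[1-(c/c_*)^{2(\alpha+2)/N}]^{q/(q+2)}$, contradicting $e_c\le M_2[1-(c/c_*)^{2(\alpha+2)/N}]^{q/(q+2)}$ since $C$ is arbitrary. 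So in the paper the upper bound is not merely half of the lemma; it is an input to the proof of the lower bound, and this self-improving use of Step 1 is what replaces your unproven concentration claim. (The same tacit assumption surfaces in your argument that $A(u_c)\to+\infty$: from $\int_{\R^N}V|u_*|^2=0$ one cannot conclude $u_*=0$ unless $V>0$ a.e.; the paper instead concludes that $u_*$ would be a minimizer of $e_{c_*}$, contradicting Theorem \ref{th1.2}(2).)
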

\begin{proof}~The proof consists of two steps.

\textbf{Step 1.}\ \ Without loss of generality, we may assume that $q=q_{i_0}$ for some $1\leq i_0\leq m$. By $(V_1)$, there exists $R>0$ small such that $V(x)\leq 2\mu_{i_0}|x-x_{i_0}|^{q_{i_0}}$ for $|x-x_{i_0}|\leq R$. Similarly to \eqref{3.6}, let
$$
u(x):=\frac{cA_{R,t} t^{\frac{N}{2}}}{c_*}\varphi\left(\frac{2(x-x_{i_0})}{R}\right)\widetilde{Q}_{\frac{N+\alpha+2}{N}}(t(x-x_{i_0}))\in \widetilde{S}(c),
$$
where $A_{R,t}>0$ and $A_{R,t}\rightarrow1$ as $t\rightarrow+\infty$. Then
$$C(u)\leq \frac{2\mu_{i_0}c^2A_{R,t}^2}{c_*^2}t^{-q_{i_0}}\ds\int_{\R^N}|x|^{q_{i_0}}|\widetilde{Q}_{\frac{N+\alpha+2}{N}}|^2.$$
Hence similarly to \eqref{3.7}, for large $t$,
$$e_c\leq \frac{A(\widetilde{Q}_{\frac{N+\alpha+2}{N}})}{2}t^2\left[1-\left(\frac{c}{c_*}\right)^{\frac{2(\alpha+2)}{N}}\right]
+2\mu_{i_0}t^{-q_{i_0}}\ds\int_{\R^N}|x|^{q_{i_0}}|\widetilde{Q}_{\frac{N+\alpha+2}{N}}(x)|^2+t^2h(t),$$
where $\lim\limits_{t\rightarrow+\infty}|h(t)|t^2=0$. By taking $t=[1-(\frac{c}{c_*})^{\frac{2(\alpha+2)}{N}}]^{-\frac{1}{q_{i_0}+2}},$ then there exists a constant $M_2>0$ independent of $c$ such that $$e_c\leq M_2 \left[1-\left(\frac{c}{c_*}\right)^{\frac{2(\alpha+2)}{N}}\right]^{\frac{q}{q+2}}.$$

\textbf{Step 2.}\ \ For any $0<c<c_*$, there exists $u_c\in \widetilde{S}(c)$ such that $E(u_c)=e_c$. By \eqref{3.5} and Theorem \ref{th1.2}, we see that
\begin{equation}\label{3.11}
C(u_c)\leq e_c\rightarrow 0 \ \ \ \hbox{as}\ \ c\rightarrow (c_*)^-.
\end{equation}
We claim that
\begin{equation}\label{3.12}
A(u_c)\rightarrow+\infty\ \ \ \hbox{as}\ \ c\rightarrow(c_*)^-.
\end{equation}
 In fact, by contradiction, if there exists a sequence $\{c_k\}\subset(0,c_*)$ with $c_k\rightarrow c_*$ as $k\rightarrow+\infty$ such that the sequence of minimizers $\{u_{c_k}\}\subset \widetilde{S}(c_k)$ is uniformly bounded in $\mathcal{H}$, then we may assume that for some $u\in\mathcal{H}$, $u_{c_k}\rightharpoonup u$ in $\mathcal{H}$ and by Lemma \ref{lem3.4} and \eqref{3.4},
$$ u_{c_k}\rightarrow u\ \ \ \hbox{in} \  L^2(\R^N)\ \ \ \hbox{and} \ \ \ B(u_{c_k})\rightarrow B(u).$$
Hence $u\in \widetilde{S}(c_*)$ and $0\leq e_{c_*}\leq E(u)\leq \lim\limits_{k\rightarrow+\infty}E(u_{c_k})=\lim\limits_{k\rightarrow+\infty}e_{c_k}=0,$
i.e. $u$ is a minimizer of $e_{c_*}$, which contradicts Theorem \ref{th1.2}.

Since
$$0\leq \frac{1}{2}A(u_c)-\frac{N}{2(N+\alpha+2)}B(u_c)\leq e_c,$$
 we see that
$$
\lim\limits_{c\rightarrow (c_*)^-}\frac{\frac{N}{N+\alpha+2}B(u_c)}{A(u_c)}=1.
$$
Then by \eqref{3.12}, set
\begin{equation}\label{3.13}
\varepsilon_c^{-2}:=\frac{N}{2(N+\alpha+2)}B(u_c)\rightarrow+\infty\ \ \ \hbox{as}\ \ c\rightarrow(c_*)^-
\end{equation}
and $\tilde{w}_c(x):=\varepsilon_c^{\frac{N}{2}}u_c(\varepsilon_cx).$ Then $|\tilde{w}_c|_2=c$ and
\begin{equation}\label{3.14}
\frac{N}{2(N+\alpha+2)}B(\tilde{w}_c)=1, \ \ \ \ \ \ \  2\leq A(\tilde{w}_c)\leq 2+2\varepsilon_c^2e_c.
\end{equation}
Let
$\delta:=\lim\limits_{c\rightarrow(c_*)^-}\sup\limits_{y\in\R^N}\int_{B_1(y)}|\tilde{w}_c|^2.$
If $\delta=0$, then $\tilde{w}_c\rightarrow 0$ in $L^s(\R^N)$ as $c\rightarrow(c_*)^-$, $2< s<2^*$, hence by \eqref{1.20}, $B(\tilde{w}_c)\rightarrow0$, which contradicts \eqref{3.14}. So $\delta>0$ and there exists $\{y_c\}\subset \R^N$ such that $\int_{B_1(y_c)}|\tilde{w}_c|^2\geq\frac{\delta}{2}>0.$ Set $$w_c(x):=\tilde{w}_c(x+y_c)=\varepsilon_c^{\frac{N}{2}}u_c(\varepsilon_cx+\varepsilon_cy_c),$$
 then \begin{equation}\label{3.15}
 \int_{B_1(0)}|w_c|^2\geq \frac{\delta}{2}>0.\end{equation}

 We claim that $\{\varepsilon_cy_c\}$ is uniformly bounded as $c\rightarrow (c_*)^-$. Indeed, if there exists a sequence $\{c_k\}\subset(0,c_*)$ with $c_k\rightarrow c_*$ as $k\rightarrow+\infty$ such that $|\varepsilon_{c_k}y_{c_k}|\rightarrow +\infty$ as $k\rightarrow+\infty$, then by $(V_0)$, \eqref{3.11} and \eqref{3.15} and the Fatou's Lemma, we have
 $$\begin{array}{ll}
0=\liminf\limits_{k\rightarrow+\infty}\ds\int_{\R^N}V(x)|u_{c_k}|^2&=\liminf\limits_{k\rightarrow+\infty}\ds\int_{\R^N}V(\varepsilon_{c_k}x+\varepsilon_{c_k}y_{c_k})|w_{c_k}(x)|^2\\[5mm]
 &\geq\ds\int_{\R^N}\liminf\limits_{k\rightarrow+\infty}[V(\varepsilon_{c_k}x+\varepsilon_{c_k}y_{c_k})|w_{c_k}(x)|^2]\\[5mm]
 &\geq\ds\int_{B_1(0)}\liminf\limits_{k\rightarrow+\infty}[V(\varepsilon_{c_k}x+\varepsilon_{c_k}y_{c_k})|w_{c_k}(x)|^2]\\[5mm]
 &\geq \ds(+\infty)\cdot\frac{\delta}{2}=+\infty,
 \end{array}$$
which is impossible. So $\{\varepsilon_cy_c\}$ is uniformly bounded as $c\rightarrow (c_*)^-$. Moreover, there exists $x_{j_0}\in\{x_1,\cdots,x_m\}$ such that
\begin{equation}\label{3.20}
\left\{\frac{\varepsilon_cy_c-x_{j_0}}{\varepsilon_c}\right\}\ \hbox{is~uniformly~bounded~as}~c\rightarrow(c_*)^-.
 \end{equation}

Indeed, by contradiction, we just suppose that for any $x_i\in\{x_1,\cdots,x_m\}$, there exists $c_k\rightarrow(c_*)^-$ as $k\rightarrow+\infty$ such that $|\frac{\varepsilon_{c_k}y_{c_k}-x_{i}}{\varepsilon_{c_k}}|\rightarrow+\infty$ as $k\rightarrow+\infty$. By $(V_1)$, \eqref{3.15} and the Fatou's Lemma, for any positive constant $C$,
$$
\begin{array}{ll}
\liminf\limits_{k\rightarrow+\infty}\varepsilon_{c_k}^{-q_i}\ds\int_{\R^N}V(\varepsilon_{c_k}x+\varepsilon_{c_k}y_{c_k})|w_{c_k}(x)|^2
&\geq\ds\int_{\R^N}\liminf\limits_{k\rightarrow+\infty}\frac{V(\varepsilon_{c_k}x+\varepsilon_{c_k}y_{c_k})}{\varepsilon_{c_k}^{q_i}}|w_{c_k}(x)|^2\\[5mm]
&\geq\ds\int_{\R^N}\liminf\limits_{k\rightarrow+\infty}\frac{V(\varepsilon_{c_k}x+x_{i})}{\varepsilon_{c_k}^{q_i}}|w_{c_k}(x+\frac{x_i-\varepsilon_{c_k}y_{c_k}}{\varepsilon_{c_k}})|^2\\[5mm]
&\geq\ds\mu_i\ds\int_{\R^N}\liminf\limits_{k\rightarrow+\infty}|x|^{q_i}|w_{c_k}(x+\frac{x_i-\varepsilon_{c_k}y_{c_k}}{\varepsilon_{c_k}})|^2\\[5mm]
&\geq\ds\mu_i\ds\int_{B_{1}(0)}\liminf\limits_{k\rightarrow+\infty}|x+\frac{\varepsilon_{c_k}y_{c_k}-x_i}{\varepsilon_{c_k}}|^{q_i}|w_{c_k}(x)|^2\\[5mm]
&\geq \ds\frac{\mu_i\delta}{2}C.
\end{array}
$$
Hence by \eqref{3.4} and \eqref{3.14},
\begin{equation}\label{3.21}
\begin{array}{ll}
e_{c_k}&=\ds\frac{1}{\varepsilon_{c_k}^2}\left(\frac{A(w_{c_k})}{2}-\frac{N B(w_{c_k})}{2(N+\alpha+2)}\right)
+\frac{1}{2}\ds\int_{\R^N}V(\varepsilon_{c_k}x+\varepsilon_{c_k}y_{c_k})|w_{c_k}|^2\\[5mm]
&\geq \ds\frac{1}{\varepsilon_{c_k}^2}\left[1-\left(\frac{c}{c_*}\right)^{\frac{2(\alpha+2)}{N}}\right]+\frac{\mu_i \delta C}{4}\varepsilon_{c_k}^{q_i}\\[5mm]
&\geq\ds (1+\frac{2}{q_i})\left(\frac{q_i\delta\mu_i}{8}\right)^{\frac{2}{q_i+2}}\left[1-\left(\frac{c_k}{c_*}\right)^{\frac{2(\alpha+2)}{N}}\right]^{\frac{q_i}{q_i+2}}C^{\frac{2}{q_i+2}}\\[5mm]
&\geq\ds (1+\frac{2}{q_i})\left(\frac{q_i\delta\mu_i}{8}\right)^{\frac{2}{q_i+2}}C^{\frac{2}{q_i+2}}\left[1-\left(\frac{c_k}{c_*}\right)^{\frac{2(\alpha+2)}{N}}\right]^{\frac{q}{q+2}}\ \ \ \hbox{as}\ \ k\rightarrow+\infty,
\end{array}\end{equation}
which contradicts the upper bound obtained in \textbf{Step 1} since $C>0$ is arbitrary. Then \eqref{3.20} holds. So for some $y_0\in\R^N,$
$$\frac{\varepsilon_cy_c-x_{j_0}}{\varepsilon_c}\rightarrow y_0\ \ \ \hbox{and}\ \ \ \varepsilon_cy_c\rightarrow x_{j_0}  \ \ \ \hbox{as}\  c\rightarrow(c_*)^-.$$
By the definition of $\{w_c\}$ and \eqref{3.14}, $\{w_c\}$ is uniformly bounded in $H^1(\R^N)$. Then up to a subsequence, we may assume that for some $w_0\in H^1(\R^N)$,
$$w_c\rightharpoonup w_0\ \ \hbox{in}\ H^1(\R^N),\ \ \ \ w_c\rightarrow w_0\ \ \hbox{in}\ L^s_{loc}(\R^N),\ 1\leq s<2^*$$
and $$w_c(x)\rightarrow w_0(x)\ \ \hbox{a.e.}\ \hbox{in}\ \R^N.$$
Then by $(V_1)$ and the Fatou's Lemma again, there exists a constant $C_2>0$ independent of $c$ such that
\begin{equation}\label{3.16}
\begin{array}{ll}
&\ \ \ \liminf\limits_{c\rightarrow(c_*)^-}\varepsilon_{c}^{-q_{j_0}}\ds\int_{\R^N}V(\varepsilon_{c}x+\varepsilon_{c}y_{c})|w_{c}(x)|^2\\[5mm]
&\geq \ds\int_{\R^N}\liminf\limits_{c\rightarrow(c_*)^-}\frac{V(\varepsilon_{c}x+\varepsilon_{c}y_{c})}{|\varepsilon_{c}|^{q_{j_0}}}|w_{c}(x)|^2\\[5mm]
&\geq \ds\int_{\R^N}\liminf\limits_{c\rightarrow(c_*)^-}\frac{V(\varepsilon_{c}x+\varepsilon_{c}y_{c})}{|\varepsilon_{c}x+\varepsilon_{c}y_{c}-x_{j_0}|^{q_{j_0}}}|x+\frac{\varepsilon_{c}y_{c}-x_{j_0}}{\varepsilon_c}|^{q_{j_0}}|w_{c}(x)|^2\\[5mm]
&\geq \mu_{j_0}\ds\int_{B_1(0)}|x+y_0|^{q_{j_0}}|w_{0}(x)|^2:=C_2>0.
\end{array}
\end{equation}
Similarly to \eqref{3.21}, we have
$$e_c\geq(1+\frac{2}{q_{j_0}})\left(\frac{q_{j_0}C_2}{2}\right)^{\frac{2}{q_{j_0}+2}}\left[1-\left(\frac{c}{c_*}\right)^{\frac{2(\alpha+2)}{N}}\right]^{\frac{q}{q+2}}:=M_1\left[1-\left(\frac{c}{c_*}\right)^{\frac{2(\alpha+2)}{N}}\right]^{\frac{q}{q+2}}$$
as $c\rightarrow(c_*)^-.$
\end{proof}

\begin{lemma}\label{lem3.7}~~Suppose that $u_c$ is a minimizer of $e_c$ and $V(x)$ satisfies $(V_0)(V_1)$, then there exist two positive constants $K_1<K_2$ independent of $c$ such that
$$K_1\left[1-\left(\frac{c}{c_*}\right)^{\frac{2(\alpha+2)}{N}}\right]^{-\frac{2}{q+2}}\leq A(u_c)\leq K_2\left[1-\left(\frac{c}{c_*}\right)^{\frac{2(\alpha+2)}{N}}\right]^{-\frac{2}{q+2}}\ \ \ \hbox{as}\ c\rightarrow(c_*)^-.$$
\end{lemma}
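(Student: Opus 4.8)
The plan is to transfer the sharp two-sided energy estimate of Lemma \ref{lem3.6} into a two-sided estimate for $A(u_c)$ by exploiting the $L^2$-preserving rescaling already introduced in the proof of that lemma. Throughout I write $t:=1-(\frac{c}{c_*})^{\frac{2(\alpha+2)}{N}}\in(0,1)$, so that $t\to0^+$ as $c\to(c_*)^-$ and Lemma \ref{lem3.6} reads $M_1 t^{\frac{q}{q+2}}\le e_c\le M_2 t^{\frac{q}{q+2}}$. I keep the notation of the proof of Lemma \ref{lem3.6}: the scaling parameter $\varepsilon_c$ with $\varepsilon_c^{-2}=\frac{N}{2(N+\alpha+2)}B(u_c)$, the rescaled minimizer $w_c(x)=\varepsilon_c^{\frac N2}u_c(\varepsilon_c x+\varepsilon_c y_c)$, and the concentration point $x_{j_0}$. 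The structural fact I rely on is $A(u_c)=\varepsilon_c^{-2}A(w_c)$ with $2\le A(w_c)\le 2+2\varepsilon_c^2 e_c$ by \eqref{3.14}, so $A(u_c)\asymp \varepsilon_c^{-2}$; both bounds on $A(u_c)$ then follow from matching bounds on $\varepsilon_c^{2}$, which I read off from the two nonnegative pieces of the energy identity $e_c=\frac{1}{\varepsilon_c^2}\big(\frac{A(w_c)}{2}-1\big)+\frac12 C(u_c)$.

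For the upper bound I would argue directly. By \eqref{3.5}, $e_c=E(u_c)\ge\frac{t}{2}A(u_c)+\frac12 C(u_c)\ge\frac{t}{2}A(u_c)$, whence $A(u_c)\le \frac{2e_c}{t}\le 2M_2 t^{\frac{q}{q+2}-1}=2M_2 t^{-\frac{2}{q+2}}$, giving $K_2=2M_2$. Equivalently, the kinetic term of the energy identity together with $\frac{A(w_c)}{2}-1\ge\frac{t}{2}A(w_c)\ge t$ (from \eqref{3.4} applied to $w_c$ and $A(w_c)\ge2$) yields $\frac{t}{\varepsilon_c^2}\le e_c\le M_2 t^{\frac{q}{q+2}}$, i.e. the lower bound $\varepsilon_c^2\ge M_2^{-1}t^{\frac{2}{q+2}}$; combined with $A(u_c)\le 2\varepsilon_c^{-2}+2e_c$ this again produces the upper bound.

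The lower bound is the main obstacle, since it requires controlling $\varepsilon_c$ from above (equivalently, bounding $B(u_c)$ from below), and the kinetic term alone cannot achieve this — it is precisely here that the potential is indispensable. I would invoke the potential lower bound already established in \eqref{3.16}, namely $C(u_c)=\int_{\R^N}V(\varepsilon_c x+\varepsilon_c y_c)|w_c|^2\ge\frac{C_2}{2}\varepsilon_c^{q_{j_0}}$ for $c$ close to $c_*$, together with the trivial upper bound $C(u_c)\le 2e_c\le 2M_2 t^{\frac{q}{q+2}}$ coming from \eqref{3.5}. These combine to $\varepsilon_c^{q_{j_0}}\le\frac{4M_2}{C_2}t^{\frac{q}{q+2}}$, i.e. $\varepsilon_c\le(\frac{4M_2}{C_2})^{1/q_{j_0}}t^{\frac{q}{(q+2)q_{j_0}}}$. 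The delicate point is that the concentration exponent $q_{j_0}$ need not equal $q=\max_i q_i$; however $q_{j_0}\le q$ forces $\frac{q}{(q+2)q_{j_0}}\ge\frac{1}{q+2}$, and since $t\in(0,1)$ this gives $t^{\frac{q}{(q+2)q_{j_0}}}\le t^{\frac{1}{q+2}}$, hence the clean bound $\varepsilon_c^2\le C\,t^{\frac{2}{q+2}}$.

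Feeding this into $A(u_c)\ge 2\varepsilon_c^{-2}$ yields $A(u_c)\ge K_1 t^{-\frac{2}{q+2}}$ and completes the proof. The only care required is that all constants ($M_1,M_2,C_2,q_{j_0}$) are the fixed ones from Lemma \ref{lem3.6}, so that $K_1,K_2$ are genuinely independent of $c$, and that every inequality is understood to hold as $c\to(c_*)^-$. I expect the genuinely nontrivial input to be the nonlocal potential concentration estimate \eqref{3.16}, which already packages the vanishing-lemma, rescaling and Fatou arguments; the remainder of Lemma \ref{lem3.7} is then the above bookkeeping on powers of $t$.
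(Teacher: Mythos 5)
Your proof is correct, and your upper bound is exactly the paper's: \eqref{3.5} gives $e_c\ge \frac{1}{2}\bigl[1-(\frac{c}{c_*})^{\frac{2(\alpha+2)}{N}}\bigr]A(u_c)$, and Lemma \ref{lem3.6} then yields $A(u_c)\le 2M_2\,t^{-\frac{2}{q+2}}$, where $t:=1-(\frac{c}{c_*})^{\frac{2(\alpha+2)}{N}}$. For the lower bound, however, you take a genuinely different route. The paper uses the Guo--Seiringer comparison scheme: for $b<c$ it tests $e_b$ with $\frac{b}{c}u_c$, which via \eqref{3.4} gives $e_b< e_c+\frac12\bigl[1-(\frac{b}{c})^{\frac{2(\alpha+2)}{N}}\bigr]A(u_c)$, and then inserts \emph{both} bounds of Lemma \ref{lem3.6}, choosing the auxiliary parameter $\eta=\frac{c-b}{c_*-c}$ fixed and large enough that $M_1(\frac{N}{2(\alpha+2)})^{\frac{q}{q+2}}(1+\eta)^{\frac{q}{q+2}}-M_2>0$; this uses only the \emph{statement} of Lemma \ref{lem3.6}. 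You instead re-enter its proof: you recycle the scaling $\varepsilon_c^{-2}=\frac{N}{2(N+\alpha+2)}B(u_c)$, the bound $A(u_c)=\varepsilon_c^{-2}A(w_c)\ge 2\varepsilon_c^{-2}$ from \eqref{3.14}, and the potential concentration estimate \eqref{3.16}, which together with $C(u_c)\le 2e_c\le 2M_2t^{\frac{q}{q+2}}$ pins down the length scale, $\varepsilon_c^{q_{j_0}}\le \frac{4M_2}{C_2}t^{\frac{q}{q+2}}$, hence (correctly exploiting $q_{j_0}\le q$ and $t<1$) $\varepsilon_c^{2}\le Ct^{\frac{2}{q+2}}$ and $A(u_c)\ge K_1t^{-\frac{2}{q+2}}$. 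There is no circularity, since \eqref{3.16} rests only on Step 1 of Lemma \ref{lem3.6} and on Theorem \ref{th1.2}, not on Lemma \ref{lem3.7}. What each approach buys: yours is shorter and exposes the mechanism (it is the potential term, not the kinetic term, that forces $\varepsilon_c\sim t^{\frac{1}{q+2}}$), and it anticipates the computations in the proof of Theorem \ref{th1.3}; the paper's comparison argument is self-contained at the level of statements and imports nothing from inside another proof. Two caveats, both inherited from the paper rather than created by you: the objects $y_c$, $x_{j_0}$, $y_0$, $w_0$, $C_2$ behind \eqref{3.16} are produced only after passing to subsequences, so to assert your bounds ``for all $c$ near $c_*$'' one should run the argument by contradiction along an arbitrary sequence $c_k\to(c_*)^-$ (harmless, since a positive constant along any such subsequence suffices, and there are only finitely many possible concentration points $x_i$); and \eqref{3.11} asserts $C(u_c)\le e_c$ whereas \eqref{3.5} literally gives $C(u_c)\le 2e_c$ --- you correctly used the factor $2$, which is all that is needed.
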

\begin{proof}~~The idea of the proof comes from that of Lemma 4 in \cite{gs}, but it needs more careful analysis.

By \eqref{3.5}, we see that $$e_c=E(u_c)\geq\frac{1}{2}\left[1-(\frac{c}{c_*})^{\frac{2(\alpha+2)}{N}}\right]A(u_c),$$
 then by Lemma \ref{lem3.6},
 $$A(u_c)\leq 2M_2\left[1-\left(\frac{c}{c_*}\right)^{\frac{2(\alpha+2)}{N}}\right]^{-\frac{2}{q+2}},$$ where $M_2$ is given in Lemma \ref{lem3.6}.

 For any fixed $b\in(0,c)$, there exist two functions $u_b\in \widetilde{S}(b)$, $u_c\in \widetilde{S}(c)$ such that $e_b=E(u_b)$ and $e_c=E(u_c)$ respectively. Then by \eqref{3.4}, we see that
$$e_b\leq E\left(\frac{b}{c}u_c\right)< e_c+\frac{1}{2}\left[1-\left(\frac{b}{c}\right)^{\frac{2(\alpha+2)}{N}}\right]A(u_c).$$
Let $\eta:=\frac{c-b}{c_*-c}>0$, then $\eta\rightarrow +\infty$ as $c\rightarrow(c_*)^-$. Then by Lemma \ref{3.6}, we have
$$\begin{array}{ll}
\ds\frac{1}{2}A(u_c)&>\ds\frac{e_b-e_c}{1-(\frac{b}{c})^{\frac{2(\alpha+2)}{N}}}\\[5mm]
&\geq\ds\frac{M_1(1-(\frac{b}{c_*})^{\frac{2(\alpha+2)}{N}})^{\frac{q}{q+2}}-M_2(1-(\frac{c}{c_*})^{\frac{2(\alpha+2)}{N}})^{\frac{q}{q+2}}}
{1-(\frac{b}{c})^{\frac{2(\alpha+2)}{N}}}\\[5mm]
&\geq \ds\left[1-\left(\frac{c}{c_*}\right)^{\frac{2(\alpha+2)}{N}}\right]^{-\frac{2}{q+2}}
\ds\frac{M_1\ds[\frac{1-(\frac{b}{c_*})^{\frac{2(\alpha+2)}{N}}}{1-(\frac{c}{c_*})^{\frac{2(\alpha+2)}{N}}}]^{\frac{q}{q+2}}-M_2}{(1-(\frac{b}{c})^{\frac{2(\alpha+2)}{N}})[1-(\frac{c}{c_*})^{\frac{2(\alpha+2)}{N}}]^{-1}}\\[5mm]
&\geq\ds\left[1-\left(\frac{c}{c_*}\right)^{\frac{2(\alpha+2)}{N}}\right]^{-\frac{2}{q+2}}
\ds\frac{M_1(\frac{N}{2(\alpha+2)})^{\frac{q}{q+2}}(1+\eta)^{\frac{q}{q+2}}-M_2}{\eta},
\end{array}$$
which gives the desired positive lower bound as $c\rightarrow(c_*)^-.$
\end{proof}

\noindent $\textbf{Proof of Theorem \ref{th1.3}}$\\
\begin{proof}~~Let $\{c_k\}\subset(0,c_*)$ be a sequence satisfying $c_k\rightarrow (c_*)^-$ as $k\rightarrow+\infty$ and denote $\{u_{c_k}\}\subset \widetilde{S}(c_k)$ to be a sequence of minimizers for $e_{c_k}$. Set
\begin{equation}\label{3.22}
\varepsilon_k:=[1-(\frac{c_k}{c_*})^{\frac{2(\alpha+2)}{N}}]^{\frac{1}{q+2}}>0.
\end{equation}
By \eqref{3.5}, Lemmas \ref{lem3.6} and \ref{lem3.7}, we see that
$$
K_1\varepsilon_k^{-2}\leq A(u_{c_k})\leq K_2\varepsilon_k^{-2},\ \ \ 0\leq C(u_{c_k})\leq 2M_2\varepsilon_k^q
$$
Let $$\tilde{w}_{c_k}(x):=\varepsilon_k^{\frac{N}{2}}u_{c_k}(\varepsilon_k x),$$
then $|\tilde{w}_c|_2=c$ and
\begin{equation}\label{3.23}
K_1\leq A(\tilde{w}_{c_k})\leq K_2, \ \ \ \ \ B(\tilde{w}_{c_k})\leq \frac{N+\alpha+2}{N}K_2
\end{equation}

Let $\delta:=\lim\limits_{k\rightarrow+\infty}\sup\limits_{y\in\R^N}\int_{B_1(0)}|\tilde{w}_{c_k}|^2.$ If $\delta=0$, then by the Vanishing Lemma \ref{lem2.6}, $\tilde{w}_{c_k}\rightarrow0$ in $L^s(\R^N)$ as $k\rightarrow+\infty$, $2<s<2^*$. Hence by \eqref{1.4}, $B(\tilde{w}_{c_k})\rightarrow0$. So
$$0<\frac{K_1}{2}\leq \frac{A(\tilde{w}_{c_k})}{2}\leq e_{c_k}\varepsilon_k^2+\frac{N}{2(N+\alpha+2)}B(\tilde{w}_{c_k})\rightarrow0\ \ \hbox{as}\ k\rightarrow+\infty,$$
 which is a contradiction. Then $\delta>0$ and there exists $\{y_k\}\subset\R^N$ such that
 $\int_{B_1(y_k)}|w_{c_k}|^2\geq\frac{\delta}{2}>0.$ Set $$w_{c_k}(x):=\tilde{w}_{c_k}(x+y_{k})=\varepsilon_k^{\frac{N}{2}}u_{c_k}(\varepsilon_k x+\varepsilon y_k), $$
 then
 \begin{equation}\label{3.24}
 \int_{B_1(0)}|w_{c_k}|^2\geq\frac{\delta}{2}>0
 \end{equation}
 and
 \begin{equation}\label{3.25}
\ds\int_{\R^N}V(\varepsilon_k x+\varepsilon_k y_k)|w_{c_k}(x)|^2=C(u_{c_k})\leq 2M_2\varepsilon_k^q.
\end{equation}
Similar to the proof in Lemma \ref{lem3.6}, one can show that $\{\varepsilon_k y_k\}$ is uniformly bounded as $k\rightarrow+\infty$.

Since $u_{c_k}\in \widetilde{S}(c_k)$ is a minimizer of $e_{c_k}$, $(E|_{\widetilde{S}(c_k)})'(u_{c_k})=0$, i.e. there exists a sequence $\{\lambda_k\}\subset\R$ such that $E'(u_{c_k})-\lambda_ku_{c_k}=0$ in $\mathcal{H}^{-1}$, where $\mathcal{H}^{-1}$ denotes the dual space of $\mathcal{H}$. Then
$$\varepsilon_k^2\lambda_k
=\frac{2\frac{N+\alpha+2}{N}\varepsilon_k^2e_{c_k}-\frac{\alpha+2}{N}\varepsilon_k^2 C(u_{c_k})-\frac{\alpha+2}{N}A(w_{c_k})}{c^2_k},$$
which and \eqref{3.23}\eqref{3.25} imply that there exists $\beta>0$ such that
$$\varepsilon_k^2\lambda_k\rightarrow-\beta^2\ \ \hbox{as}\ k\rightarrow+\infty.$$
By the definition of $w_{c_k}$, we see that $w_{c_k}$ satisfies the following equation
\begin{equation}\label{3.26}
-\Delta w_{c_k}+\varepsilon_k^2V(\varepsilon_k x+\varepsilon_k y_k)w_{c_k}-(I_\alpha*|w_{c_k}|^{\frac{N+\alpha+2}{N}})|w_{c_k}|^{\frac{N+\alpha+2}{N}-2}w_{c_k}
=\lambda_k\varepsilon_k^2 w_{c_k}\ \ \hbox{in}\ \R^N.
\end{equation}
Since $\{w_{c_k}\}$ is uniformly bounded in $H^1(\R^N)$, there exists $w_0\in H^1(\R^N)$ such that
$$w_{c_k}\rightharpoonup w_0\ \ \hbox{in}\ H^1(\R^N),\ \  \ \ \ \ \ w_{c_k}\rightarrow w_0\ \ \hbox{in}\ L^s_{loc}(\R^N),\ 1\leq s<2^*$$
and
$$w_{c_k}(x)\rightarrow w_0(x)\ \ \ \hbox{a.e.~in}\ \R^N.$$
Moreover, \eqref{3.24} implies that $w_0\neq0$. Then $w_0$ is a nontrivial solution of $-\Delta w_0+\beta^2 w_0=(I_\alpha*|w_0|^{\frac{N+\alpha+2}{N}})|w_0|^{\frac{N+\alpha+2}{N}-2}w_0$ in $\R^N.$ Set $$w_0(x):=\beta^{\frac{N}{2}}W_0(\beta x),$$ then $W_0$ is a nontrivial solution of
\begin{equation}\label{3.27}
-\Delta W_0+W_0=(I_\alpha*|W_0|^{\frac{N+\alpha+2}{N}})|W_0|^{\frac{N+\alpha+2}{N}-2}W_0,\ \  \ x\in\R^N.
\end{equation}
Hence by Lemma \ref{lem3.1} (2), we have $A(W_0)=\frac{N}{N+\alpha+2}B(W_0).$ So it follows from \eqref{3.4} that
$$
c_*^{\frac{2(\alpha+2)}{N}}\leq \frac{\frac{N+\alpha+2}{N}A(W_0)|W_0|_2^{\frac{2(\alpha+2)}{N}}}{B(W_0)}=|W_0|_2^{\frac{2(\alpha+2)}{N}}=|w_0|_2^{\frac{2(\alpha+2)}{N}}\leq \lim\limits_{k\rightarrow+\infty}|w_{c_k}|_2^{\frac{2(\alpha+2)}{N}}=c_*^{\frac{2(\alpha+2)}{N}},
$$
i.e. $|w_0|_2=|W_0|_2=c_*$. Hence $w_{c_k}\rightarrow w_0$ in $L^2(\R^N)$ and then by the interpolation inequality,
$$w_{c_k}\rightarrow w_0\ \ \hbox{in}\ L^s(\R^N)\ \  \hbox{for\ all}\ \ 2\leq s<2^*.$$
Moreover, Lemma \ref{lem3.2} shows that $W_0$ is a groundstate solution of \eqref{3.27}.  So by Lemma \ref{lem3.1} (3)(4), $W_0(x)=O(|x|^{-\frac{N-1}{2}}e^{-|x|})$ as $|x|\rightarrow+\infty$ and we may assume that up to translations, $W_0(x)$ is radially symmetric about the origin.

By \eqref{3.25}, we see that for any $q_i\in\{q_1,\cdots,q_m\}$,
$$\frac{1}{\varepsilon^{q_i}}\ds\int_{\R^N}V(\varepsilon_k x+\varepsilon_ky_k)|w_{c_k}(x)|^2\leq 2M_2.$$
Similarly to the proof of \eqref{3.20}, there exists $x_{j_0}\in\{x_1,\cdots,x_m\}$ and $y_0\in\R^N$ such that $\varepsilon_k y_k\rightarrow x_{j_0}$ and $\frac{\varepsilon_k y_k-x_{j_0}}{\varepsilon_k}\rightarrow y_0$ as $k\rightarrow +\infty$.
Then similarly to \eqref{3.16}, we see that
\begin{equation}\label{3.28}\begin{array}{ll}
\ds\liminf\limits_{k\rightarrow +\infty}\frac{1}{\varepsilon_k^{q}}\ds\int_{\R^N}V(\varepsilon_kx+\varepsilon_ky_k)|w_{c_k}(x)|^2
&\geq\ds\int_{\R^N}\ds\liminf\limits_{k\rightarrow +\infty}\frac{ V(\varepsilon_kx+\varepsilon_ky_k)}{\varepsilon_k^{q_{j_0}}}|w_{c_k}(x)|^2\\[5mm]
&\geq\mu_{j_0}\ds\int_{\R^N}|x+y_0|^{q_{j_0}}|w_0(x)|^2\\[5mm]
&=\ds\frac{\mu_{j_0}}{\beta^{q_{j_0}}}\ds\int_{\R^N}|x+\beta y_0|^{q_{j_0}}|W_0(|x|)|^2\\[5mm]
&\geq \ds\frac{\mu_{j_0}}{\beta^{q_{j_0}}}\ds\int_{\R^N}|x|^{q_{j_0}}|W_0(x)|^2,
\end{array}\end{equation}
where the last inequality is strict if and only if $y_0\neq0$. Hence similarly to \eqref{3.21},
\begin{equation}\label{3.29}\begin{array}{ll}
\ds\liminf\limits_{k\rightarrow +\infty}\frac{e_{c_k}}{\varepsilon_k^{q}}
&\geq \ds\frac{1}{2}A(w_0)+\liminf\limits_{k\rightarrow +\infty}\ds\frac{1}{2\varepsilon_k^q}\ds\int_{\R^N}V(\varepsilon_k x+\varepsilon_ky_k)|w_{c_k}(x)|^2\\[5mm]
&\geq\ds\frac{1}{2}\left(\beta^2c_*^2\frac{N}{\alpha+2}+\ds\frac{\mu_{j_0}}{\beta^{q_{j_0}}}\ds\int_{\R^N}|x|^{q_{j_0}}|W_0(x)|^2\right)\\[5mm]
&\geq \ds c_*^2\left(\frac{\beta^2}{2}\frac{N}{\alpha+2}+\frac{\lambda_{j_0}^{q_{j_0}+2}}{q_{j_0}\beta^{q_{j_0}}}\right)\\[5mm]
&\geq\ds\frac{\lambda_{j_0}^2c_*^2}{2}\left(\frac{ N}{\alpha+2}\right)^{\frac{q_{j_0}}{q_{j_0}+2}}\frac{q_{j_0}+2}{q_{j_0}}\\[5mm]
&\geq\ds\frac{\lambda^2c_*^2}{2}\left(\frac{N}{\alpha+2}\right)^{\frac{q}{q+2}}\frac{q+2}{q},
\end{array}
\end{equation}
where $\lambda=\min\limits_{1\leq i\leq m}\lambda_i$ and $q=\max\limits_{1\leq i\leq m}q_i$.

On the other hand, for any $x_i\in \{x_1,\cdots,x_m\}$ and $t>0$, let $v_k(x)=A_k\frac{c_k}{c_*}\left(\frac{t}{\varepsilon_k}\right)^{\frac{N}{2}}\varphi(x-x_i)W_0(\frac{t(x-x_i)}{\varepsilon_k})$, where $\varphi$ is a cut-off function given as in \eqref{3.6} and $A_k>0$ is chosen to satisfy that $v_k\in \widetilde{S}(c_k)$. Then $A_k\rightarrow1$ as $k\rightarrow+\infty$. Similarly to \eqref{3.7}, by the Dominated Convergence theorem, we see that
\begin{equation}\label{3.30}\begin{array}{ll}
\ds\lim\limits_{k\rightarrow +\infty}\frac{E(v_k)}{\varepsilon_k^{q}}&=\ds\frac{t^2}{2}
A(W_0)+\lim\limits_{k\rightarrow +\infty}\frac{t^N}{2\varepsilon_k^{N+q}}\ds\int_{\R^N}V(x)|\varphi(x-x_i)W_0(\frac{t(x-x_i)}{\varepsilon_k})|^2\\[5mm]
&=\ds\frac12\left(\frac{t^2c_*^2 N}{\alpha+2}
+\frac{\overline{\mu}_i}{t^{q}}\ds\int_{\R^N}|x|^{q}|W_0(x)|^2\right)\\[5mm]
&\leq\ds c_*^2\left(\frac{t^2}{2}\frac{N}{\alpha+2}
+\frac{\overline{\lambda}_i^{q+2}}{qt^{q}}\right),\end{array}
\end{equation}
where $$\overline{\mu}_i=\lim\limits_{x\rightarrow x_i}\frac{V(x)}{|x-x_i|^q}=\left\{%
\begin{array}{ll}
    \mu_i, & \hbox{if~$q=q_i$}, \\
   +\infty, & \hbox{if~$q\neq q_i$}\\
\end{array}%
\right.$$ and $$\overline{\lambda}_i=\left(\frac{\overline{\mu}_i q}{2c_*^2}\int_{\R^N}|x|^{q}|W_0(x)|^2\right)^{\frac{1}{q+2}}=\left\{%
\begin{array}{ll}
    \lambda_i, & \hbox{if~$q=q_i$}, \\
   +\infty, & \hbox{if~$q\neq q_i$}\\
\end{array}%
\right..$$
 So, since $t>0$ is arbitrary, by taking the infimum over $\{\overline{\lambda}_i\}_{i=1}^{m}$ in \eqref{3.30} and combining \eqref{3.29}, we see that
 $$\ds\lim\limits_{k\rightarrow+\infty}\frac{e_{c_k}}{\varepsilon_k^{q}}= \ds\frac{\lambda^2c_*^2}{2}\left(\frac{N}{\alpha+2}\right)^{\frac{q}{q+2}}\frac{q+2}{q}.$$
Then \eqref{3.28}-\eqref{3.30} must be equalities, which imply that
$$y_0=0,\ \ \ \beta=\left(\frac{\alpha+2}{N}\right)^{\frac{1}{q+2}}\lambda$$
and $\varepsilon_k y_k\rightarrow x_{j_0}\in\{x_i|~\lambda_i=\lambda,~1\leq i\leq m\}$. Therefore,
$$\varepsilon_k^{\frac{N}{2}}u_{c_k}(\varepsilon_k x+\varepsilon_k y_k)=w_{c_k}(x)\rightarrow w_0(x)= \left((\frac{\alpha+2}{N})^{\frac{1}{q+2}}\lambda\right)^{\frac{N}{2}} W_0((\frac{\alpha+2}{N})^{\frac{1}{q+2}}\lambda x)$$
in $L^{\frac{2Ns}{N+\alpha}}(\R^N)$ for all $\frac{N+\alpha}{N}\leq s<\frac{N+\alpha}{(N-2)_+}$.

\end{proof}

 \end{document}